%&latex
\documentclass{article}
\usepackage{amsfonts,enumerate}
\usepackage{amsthm,fullpage}
\usepackage{amsmath,mathdots}
\usepackage{amssymb,mathrsfs}
\newcommand{\al}{\alpha}
\newcommand{\be}{\beta}
\newcommand{\si}{\sigma}
\newcommand{\Ckh}{\mathcal{C}_{kh}}
\newcommand{\Wkh}{\mathcal{W}_{kh}}
\newcommand{\Rkh}{\mathcal{R}_{kh}}
\newcommand{\la}{\lambda}
\newcommand{\R}{\mathbb{R}}
\newcommand{\p}{\partial}

\newcommand{\F}{\mathcal{F}}
\renewcommand{\cal}{\mathcal}
\DeclareMathOperator{\im}{\mathrm{im}}
\DeclareMathOperator{\re}{\mathrm{Re}}
\DeclareMathOperator{\imag}{\mathrm{Im}}

\DeclareMathOperator{\sgn}{\mathrm{sgn}}
\newtheoremstyle{NTS1}{\topsep}{\topsep}%
     {\it}%         Body font
     {}%         Indent amount (empty = no indent, \parindent = para indent)
     {\bf}% Thm head font
     {}%        Punctuation after thm head
     { }%     Space after thm head (\newline = linebreak)
     {\thmname{#1}\thmnumber{ #2}\thmnote{ (#3)}}%         Thm head spec

\theoremstyle{NTS1}
\newtheorem{thm}{Theorem}[section]

\newtheorem{lem}[thm]{Lemma}

\theoremstyle{remark}
\newtheorem*{prems}{Remarks}
\newenvironment{rems}{\begin{prems}$\;$\begin{enumerate}\item}{\end{enumerate}\end{prems}}
\theoremstyle{remark}
\newtheorem*{rem}{Remark}
\numberwithin{equation}{section}
\begin{document}

\title{Existence of capillary-gravity waves that are perturbations of Crapper's waves}
\author{Peter de Boeck}
\date{}
\maketitle
\begin{abstract}
This paper is concerned with two-dimensional, steady, periodic water waves propagating at the free surface of water either in a flow of finite depth and constant vorticity over an impermeable flat bed or in an irrotational flow of great depth. In both cases, the motion of these waves is assumed to be governed both by surface tension and gravitational forces. By considering a particular scaling regime, it can be shown that both these schemes approach a limiting form, corresponding to flows of great depth where the effect of gravity is neglected. An application of the Implicit Function Theorem demonstrates that Crapper's explicit solutions to this latter problem can be perturbed by means of the aforementioned scaling to yield capillary-gravity waves of any finite (or infinite) depth and any constant vorticity.
\end{abstract}
%\tableofcontents
\section{Introduction}
The problem of finding steady periodic pure-capillary waves, travelling on the surface of fluid of great depth can be trivially obtained by formally setting $g=0$ in the equations for the corresponding capillary-gravity problem. That is to say, this problem may be approached by imagining that the force of gravity is non-existent and that surface tension is the only force governing the behaviour of these waves.

Historically, and in more recent years, there have been many studies of this pure-capillary problem, for both the finite- and infinite-depth scenarios (e.g. \cite{Crapper,Kin,Mcap,Mreg,Wcap}). This particular problem is mainly motivated by the observation that in many situations (such as when the wind begins blowing over the flat, still surface of a body of water \cite{Kin}) small-amplitude wave trains can arise that are governed principally by capillarity. As these waves grow in amplitude, the effects of gravity begin to be felt, and the waves are best described by the capillary-gravity problem (cf. the discussion in \cite{Walsh}). It is therefore natural to study how pure-capillary waves may evolve into capillary-gravity waves and in what sense the latter can be viewed as perturbations of the former.

Amongst the most well-known works on pure-capillary waves is the paper \cite{Crapper}, written in 1957 by G.D. Crapper. In this paper, Crapper wrote down a one-parameter family of explicit solutions for the infinite-depth pure-capillary travelling wave problem, which were to become known as Crapper's waves.  These are solutions whose profiles can be written down explicitly and which can have arbitrary amplitude.  These waves exhibit many interesting properties: in addition to having large amplitude, there exist parameter values where the profiles are overhanging and those where `bubbles' of air are trapped inside the fluid.  For large enough values of the parameter, the profiles become self-intersecting, and these profiles are discarded as unrealistic (cf. \cite{OS}).

In view of the earlier comments, when studying capillary-gravity waves as perturbations of pure-capillary waves, one may consider how -- and indeed if -- Crapper's waves are thus perturbed. Mathematically speaking, it can be contemplated whether there exist capillary-gravity waves for any small, non-zero $g\in\R$ that approach Crapper's waves (in some sense) as $g\to0$. Indeed, this is exactly the approach adopted in the recent paper \cite{AAW}, where the answer was given in the affirmative. Via an application of the Implicit Function Theorem, the authors were able to demonstrate the existence of infinite-depth capillary-gravity waves near to Crapper's waves.

In this work, we aim to establish somewhat similar results to those in \cite{AAW}, but using a different approach, which will be seen later to generalise to a broader setting. In particular, we wish to investigate in what manner infinite-depth pure-capillary waves can be seen as a limiting regime, both for infinite-depth capillary-gravity waves and finite-depth capillary-gravity waves with constant non-zero vorticity. Then, we aim to establish that Crapper's waves are, in some as-yet undefined sense, a limiting profile under this same regime.

As will be seen in \S2 (cf. \cite{ST}), the problem of infinite-depth capillary-gravity waves is equivalent to finding solutions of\begin{equation}\left(w'^2+(1+\cal{C}w')^2\right)^{-1/2}\\=\left(\frac{Q}{c^2}-\frac{2gw}{c^2k}\right)\left(w'^2+(1+\cal{C}w')^2\right)^{1/2}+2\frac{\si k}{c^2}\frac{(1+\mathcal{C}w')w''-w'\mathcal{C}w''}{w'^2+(1+\mathcal{C}w')^2},\label{deepcg}\end{equation}where $w$ is an appropriately regular $2\pi$-periodic function, $\cal{C}$ denotes the periodic Hilbert transform and $c,g,k,Q,\si$ denote parameters relating to wave speed, gravity, wave period, hydraulic head and surface tension, respectively. As mentioned previously, we can obtain the pure-capillary problem by setting $g=0$, leading to the equation \begin{equation}\left(w'^2+(1+\cal{C}w')^2\right)^{-1/2}\\=\frac{Q}{c^2}\left(w'^2+(1+\cal{C}w')^2\right)^{1/2}+2\frac{\si k}{c^2}\frac{(1+\mathcal{C}w')w''-w'\mathcal{C}w''}{w'^2+(1+\mathcal{C}w')^2}.\label{deepc}\end{equation}Then, for some $c,k,Q,\si$ such that $Q/c^2=1$ and $\si k/c^2=(1+A^2)/(1-A^2)$ for some parameter $A\in(-1,1)$, we shall show in \S\ref{crapsol} that \eqref{deepc} is satisfied by Crapper's waves, which correspond to the function $$w_A(t)=\frac{2(1-A^2)}{1+A^2+2A\cos{t}}-2;$$the constant arising by virtue of our requirement that $w_A$ have zero mean over one period.

However, it can be noted that gravitational effects are not non-existent, and so we should aim not to consider $g=0$.  With this in mind, our aim is to understand how pure capillary waves can arise as limiting profiles in certain scaling regimes.  For example, when the profile is close to being horizontal (i.e. $w\ll1$), but remains strongly curved (e.g. when the profile is highly oscillatory), it is clear that the surface tension term in \eqref{deepcg} will dominate over the gravitational term; similarly (for any amplitude of profile) when the period becomes sufficiently small (i.e. $k\gg1$). As remarked above, wind-driven waves on the surface of a lake or pond often demonstrate this phenomenon. Thus, it follows that the pure capillary regime is a good \textit{approximation} to the gravity-capillary regime, when considering waves of very short period propagating with very fast speed. (We require the speed of wave propagation to be large under a suitable scaling in order to balance the shortness of the period.)

As was remarked earlier, we shall also be interested in finite-depth capillary-gravity waves with constant vorticity, which we shall see in \S2 (cf. \cite{M}) are governed predominantly by the equation \begin{multline}\left\{\frac{m}{h}-\frac{\gamma h}{2}+\frac\gamma k\left(\frac{[w^2]}{2kh}+\Ckh(ww')-w-w\Ckh w'\right)\right\}^2\\=\left(Q-\frac{2gw}k+2\sigma k\frac{(1+\Ckh w')w''-w'\Ckh w''}{(w'^2+(1+\Ckh w')^2)^{3/2}}\right)(w'^2+(1+\Ckh w')^2),\label{finitecg}\end{multline}where $\cal{C}_d$ is the periodic Hilbert transform for a strip and $\gamma$ is the constant of vorticity. In a sense to be made precise in \S\ref{crapbif}, this operator has the property that $\cal{C}_d\to\cal{C}$ as $d\to\infty$. It is this fact that will allow us to conclude that, under the same scaling (where $c:=m/h$), the equation \eqref{finitecg} takes \eqref{deepc} as its limiting form.

Although we do not wish to make an approximation to our full regime, we can make use of the limiting solutions to prove existence of solutions as we approach the limiting regime, in the following manner. For $g,\si>0$ we consider the following, bijective, change of variables from $\R_+\times\R_+$ to $\R_+\times\R_+$:\footnote{Note that we take $c>0$ here.  When $\gamma=0$ or when we are in the infinite-depth scenario it is clear that the equation depends only on $c^2$ and that this is then no great restriction.}$$(\al,\be)=\left(\frac{g}{c^2k},\frac{\si k}{c^2}\right).$$Then, the infinite-depth gravity-capillary problem arises exactly when $\al=0$, if this change of variables is made in \eqref{deepcg} and \eqref{deepc}. Making the same change of variables in \eqref{finitecg}, it can be shown that \eqref{deepc} arises as the formal limit, when taking $\al\to0$.

By exploiting this fact that \eqref{deepc} occurs as the limiting form for both \eqref{deepcg} and \eqref{finitecg}, we can rigorously prove the existence of capillary-gravity waves of both finite- and infinite-depth, the former also in the case of constant non-zero vorticity; these waves will be seen to be perturbations of Crapper's waves in a sense to be made definite.
\section{The free boundary problem}
Given any $L>0$, a domain $\Omega\subset\R^2$ is called an $L$-periodic strip-like domain if it is horizontally unbounded, with a boundary which consisting of the real axis, denoted by $\cal{B}$, and a (possibly unknown) curve $\cal{S}$, given in parametric form by \begin{subequations}\label{fbdef}\begin{equation}\cal{S}=\{(u(s),v(s)):s\in\R\}\label{surface}\end{equation}such that \begin{equation}u(s+L)=u(s)+L,\quad v(s+L)=v(s)\text{ for all }s\in\R.\label{fbper}\end{equation}\end{subequations}Then, for any $L>0$, the problem of finding $L$-periodic steady gravity-capillary water waves with a constant vorticity $\gamma$ in a flow of finite depth over a flat bed can be formulated as the free-boundary problem of finding an $L$-periodic strip-like domain $\Omega\subset\R^2$ (where the curve $\cal{S}$ represents the free surface of the water and is \emph{a priori} unknown) and a function $\psi:\Omega\to\R$, satisfying the following equations and boundary conditions:\begin{subequations}\label{form}\begin{align}\Delta\psi&=-\gamma\quad\,\,\text{in }\Omega,&\\\psi&=-m\quad\text{on }\cal{B},\label{bedcond}\\\psi&=0\,\qquad\text{on }\cal{S},\\|\nabla\psi|^2+2gv-2\si\frac{u_sv_{ss}-u_{ss}v_s}{(u_s^2+v_s^2)^{3/2}}&=Q\quad\;\;\;\text{on }\cal{S}.\end{align}\end{subequations}The function $\psi$ represents the stream function, giving the velocity field $(\psi_Y,-\psi_X)$ in a frame moving at the constant wave speed; here the parameterisation in \eqref{surface} has been chosen so as to be time-independent in this moving frame. The constants $g,m\in\R,\si>0$ in \eqref{form} are, respectively, the constant of gravitational acceleration, the relative mass flux and the coefficient of surface tension. The constant $Q\in\R$ is a constant related to the hydraulic head.

It is possible to derive these equations directly from the Euler equations for incompressible fluid flow, as in \cite{M}.

We also wish to study in this paper capillary-gravity waves of infinite depth. It was shown in \cite{E} that for flows of infinite depth, the only constant vorticity is zero vorticity, and so we restrict ourselves to irrotational flows, and take $\gamma=0$. In this regime, we search for a domain $\Omega$ lying below a curve $\cal{S}$ satisfying \eqref{fbdef} as before, but which is  not bounded below, and a function $\psi:\Omega\to\R$ satisfying \eqref{form} (with $\gamma=0$), where the condition \eqref{bedcond} is replaced by \begin{equation}\nabla\psi(X,Y)\to(0,c)\text{ as }Y\to-\infty.\tag{\underline{2.2b}}\label{infdepthcond}\end{equation}

For any $p\ge0$ and $\al\in(0,1)$, let $C^{p,\al}$ denote the space of functions whose partial derivatives of order up to $p$ are H\"older continuous of exponent $\al$ over their domain of definition. Let us denote by $C^{p,\al}_L$ the subspace of $C^{p,\al}(\R)$ consisting of functions that are $L$-periodic. Taking $L=2\pi$, we define the mean over one period of any function $f\in C^{p,\al}_{2\pi}$ by $$[f]:=\frac1{2\pi}\int_{-\pi}^\pi f(t)\,\mathrm{d}t.$$We denote by $C^{p,\al}_{2\pi,0}$ those functions $f\in C^{p,\al}_{2\pi}$ with zero mean, i.e. $[f]=0$.

Throughout this paper, our interest is in solutions $(\Omega,\psi)$ of the problem \eqref{form} of class $C^{2,\al}$, for some $\al\in(0,1)$, in the sense that the free boundary, $\cal{S}$, has a parameterisation \eqref{surface} with $(s\mapsto u(s)-s),v$ both functions in $C^{2,\al}_L$ with\begin{equation}u'(s)^2+v'(s)^2\ne0\quad\text{for all }s\in\R,\label{nostag}\end{equation}while $\psi\in C^\infty(\Omega)\cap C^{1,\al}(\overline{\Omega})$.
\subsection{A first reformulation}
Using the approach presented in \cite{CV}, it was shown in \cite{M} that \eqref{form} can be reformulated as a problem on a fixed domain, via the use of conformal mappings. The approach to this formulation is as follows.

For any $d>0$ let $$\cal{R}_d:=\{(x,y)\in\R^2:-d<y<0\}.$$Given any $w\in C^{p,\al}_{2\pi}$ let $W\in C^{p,\al}(\overline{\cal{R}_d})$ be the unique solution of \begin{align*}\Delta W&=0\quad\text{in }\cal{R}_d,\\W(x,-d)&=0,\quad x\in\R,\\W(x,0)&=w(x),\quad x\in\R.\end{align*}The function $(x,y)\mapsto W(x,y)$ is $2\pi$-periodic in $x$ throughout $\cal{R}_d$. Let $Z$ be the (up to a constant) unique harmonic conjugate of $W$, that is, such that $Z+iW$ is holomorphic on $\cal{R}_d$. As described thoroughly in \cite[\S2]{CV}, if $w\in C^{p,\al}_{2\pi,0}$ then the function $(x,y)\mapsto Z(x,y)$ is $2\pi$-periodic in $x$ throughout $\cal{R}_d$. We then prescribe the constant in the definition of $Z$ by requiring that the function $x\mapsto Z(x,0)$ has zero mean.

Then the operator $\cal{C}_d:C^{p,\al}_{2\pi,0}\to C^{p,\al}_{2\pi,0}$ is defined by $$\cal{C}_d(w)(x)=Z(x,0),\quad x\in\R.$$(It was shown in \cite{CV} that the operator $C_d$ satisfies some sort of Privalov's Theorem, and maps $C^{p,\al}$ functions into $C^{p,\al}$ indeed.)

It was also shown in \cite{CV} that for any $L$-periodic strip-like domain, $\Omega$, there is a unique positive number $h$, called its conformal mean depth, such that $\Omega$ is the image of a conformal mapping $U+iV$ from $\Rkh$ with \begin{eqnarray*}U(x+2\pi,y)&=&U(x,y)+L\\V(x+2\pi,y)&=&V(x,y)\end{eqnarray*}for $(x,y)\in\Rkh$, where $L=2\pi/k$. In particular, it follows that $\cal{S}$ can be parameterised by functions $u\in C^{2,\al},v\in C^{2,\al}_{2\pi}$ with mean $[v]=h$. Therefore, it follows that for any $L$-periodic strip-like domain we may make a vertical translation to place its flat bottom at $Y=-h$, where $h$ is its conformal mean depth, whence it is clear that the free surface will admit a parameterisation with a zero-mean $Y$ coordinate. This is especially important since we wish this theory to agree with the infinite depth problem, which, as will be seen later, corresponds formally to setting $\gamma=0$ and $h=\infty$.

Bearing in mind that we are free to make this vertical translation we now present the reformulation, as given in \cite[Theorem 1]{M}, but where suitable adjustments have been made to take into account this vertical translation, and also a substitution has been made to rescale the function of interest.

We have that the problem of finding $(\Omega,\psi)$ of class $C^{2,\al}$ solving \eqref{form} is equivalent to finding an $h>0$ and $w\in C^{2,\al}_{2\pi,0}$ such that \begin{subequations}\label{next}\begin{align}&\left\{\frac{m}{h}-\frac{\gamma h}{2}+\frac\gamma k\left(\frac{[w^2]}{2kh}+\Ckh(ww')-w-w\Ckh w'\right)\right\}^2\nonumber\\&\qquad\qquad\qquad\qquad\qquad=\left(Q-\frac{2gw}k+2\sigma k\frac{(1+\Ckh w')w''-w'\Ckh w''}{\Wkh(w)^{3/2}}\right)\Wkh(w)\label{2eqn}\\&w(t)>-kh\quad\text{for all }t\in\mathbb{R}&\label{abovebed}\\&\text{the mapping }t\mapsto\left(t+\Ckh w(t),w(t)\right)\text{ is injective on }\mathbb{R}\label{injcond}\\&\Wkh(w)(t)\ne0\quad\text{for all }t\in\mathbb{R},\label{Wkh0}\end{align}\end{subequations}where \begin{equation}\Wkh(w)=w'^2+(1+\Ckh w')^2\label{Wkh}\end{equation}and, as above, $L=2\pi/k$.

Here it should be noted that the formulation in \cite{M} has been slightly changed, using the general theory of periodic harmonic functions on a strip from \cite{CV}, to use the operator $\Ckh$, the periodic Hilbert transform for a strip.  A full and coherent treatment of the properties of this and related operators can be found in \cite{CV}.

Given an $h>0$ and $w\in C^{2,\al}_{2\pi,0}$ satisfying \eqref{next}, it then follows that the fluid region is bounded by the curves $$\mathcal{B}=\{(X,-h):X\in\mathbb{R}\},\qquad\mathcal{S}=\left\{\left(\frac1k(t+\Ckh w(t)),\frac{w(t)}k\right):t\in\mathbb{R}\right\}.$$

We remark at this point that it should be noted that for the purposes of the analysis we shall ignore the conditions \eqref{abovebed}, \eqref{injcond}, simply solving \eqref{2eqn} and \eqref{Wkh0} then discarding any solutions which violate \eqref{abovebed}, \eqref{injcond} afterwards. Hence, throughout we shall remove these conditions, leaving it to the reader to remember that any solutions found may have to be discarded if it happens that they violate one or both of these conditions.

In the case of infinite-depth waves (with, recall, zero vorticity), the problem (\underline{2.2}) can be reformulated in a similar manner, where, instead of viewing the fluid domain as the conformal image of a strip, we view $\Omega$ as the conformal image of the lower half plane.

Then, the problem of finding $(\Omega,\psi)$ of class $C^{2,\al}$ solving (\underline{2.2}) is equivalent to finding $w\in C^{2,\al}_{2\pi,0}$ such that\begin{subequations}\label{infnext}\begin{align}&c^2=\left(Q-\frac{2gw}k+2\sigma k\frac{(1+\cal{C}w')w''-w'\cal{C}w''}{\cal{W}(w)^{3/2}}\right)\cal{W}(w)\label{infeqn}\\&\text{the mapping }t\mapsto\left(t+\cal{C}w(t),w(t)\right)\text{ is injective on }\mathbb{R}&&&&&\label{infinjcond}\\&\cal{W}(w)(t)\ne0\quad\text{for all }t\in\mathbb{R},\label{W0}\end{align}\end{subequations}where \begin{equation}\cal{W}(w)=w'^2+(1+\cal{C} w')^2\label{Wdef}\end{equation}and, as above, $L=2\pi/k$. Here $\cal{C}$ denotes the usual periodic Hilbert transform.

As remarked before in the case of finite-depth, here we also disregard condition \eqref{infinjcond} and leave it to the reader to remember that any solutions found later may not be physical if it happens that they violate this condition.
\subsection{An additional reformulation of the problem}As derived above, we aim to find $h,k,m,Q,\gamma\in\mathbb{R},\;h,k>0$ and $w\in C^{2,\al}_{2\pi,0}$ such that \eqref{next} holds. However, it will be seen that this formulation will not be suitable when we come to the proof of our main theorem. For this we will require an alternative formulation, the proof of which can be found in \cite{deB}\begin{thm}For $\si\ne0$, and for any $\gamma\in\mathbb{R}$ and $h,k>0$, we have that $m,Q\in\mathbb{R}$ and $w\in C^{2,\al}_{2\pi,0}$ satisfy \eqref{2eqn}, subject to \eqref{Wkh0}, if and only if \begin{subequations}\label{2nd}\begin{multline}Q=\left[\Wkh(w)^{1/2}\right]^{-1}\left[\left\{\frac{m}{h}-\frac{\gamma h}{k}+\frac{\gamma}{k}\left(\frac{[w^2]}{2kh}+\mathcal{C}_{kh}(ww')-w-w\mathcal{C}_{kh}w'\right)\right\}^2\Wkh(w)^{-1/2}\right.
\\\left.+\frac{2gw}k\Wkh(w)^{1/2}\right]\label{mudef}\end{multline}and\begin{multline}w''=\frac{w'}{2\sigma k}\Ckh\left(\left\{\frac{m}{h}-\frac{\gamma h}2+\frac{\gamma}k\left(\frac{[w^2]}{2kh}+\mathcal{C}_{kh}(ww')-w-w\mathcal{C}_{kh}w'\right)\right\}^2\Wkh(w)^{-1/2}-\right.\\\left.-\left(Q-\frac{2gw}k\right)^{\phantom{2}}\!\!\Wkh(w)^{1/2}\right)+\\+\frac{1}{2\sigma k}(1+\Ckh w')\left(\left\{\frac{m}{h}-\frac{\gamma h}2+\frac{\gamma}k\left(\frac{[w^2]}{2kh}+\mathcal{C}_{kh}(ww')-w-w\mathcal{C}_{kh}w'\right)\right\}^2\Wkh(w)^{-1/2}\right.-\\\left.-\left(Q-\frac{2gw}k\right)^{\phantom{2}}\!\!\Wkh(w)^{1/2}\right),\label{2ndeqn}\end{multline}subject to \begin{equation}\Wkh(w)(t)\ne0\qquad\text{for all }t\in\mathbb{R}.\label{2ndWkh0}\end{equation}\end{subequations}That is, \eqref{next} is equivalent to \eqref{2nd}.\end{thm}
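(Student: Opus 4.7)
The plan is to extract from the single equation \eqref{2eqn} two distinct pieces of information: a scalar identity (its mean, becoming \eqref{mudef}) and a pointwise equation for $w''$ (becoming \eqref{2ndeqn}). The key structural insight is that the curvature-like term on the right of \eqref{2eqn} is the derivative of the angle $\theta$ of the complex tangent vector $z := 1 + \Ckh w' + iw'$, and that $z$ extends to the top-boundary trace of a holomorphic function $\hat z = 1 + (Z+iW)_x$ on the strip $\Rkh$, real-valued on the bottom since $W\equiv 0$ there.

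For the forward direction, I would divide \eqref{2eqn} through by $\Wkh(w)^{1/2}$ (permitted by \eqref{Wkh0}) and move the non-curvature terms of the RHS to the left, calling the resulting quantity $F$; the equation then reads $F = 2\sigma k\,[(1+\Ckh w')w''-w'\Ckh w'']/\Wkh(w)$. Writing $z=re^{i\theta}$, the right-hand factor is exactly $\theta'$. Because $\hat z$ is non-vanishing on $\Rkh$ (by \eqref{Wkh0}) and traces a periodic, real-positive value on the bottom, $\theta$ is single-valued and $2\pi$-periodic, so $[\theta']=0$. Taking the mean yields $[F]=0$, which upon solving for $Q$ gives exactly \eqref{mudef}. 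To obtain \eqref{2ndeqn}, I would next use that $\log\hat z = \log r + i\theta$ is holomorphic on $\Rkh$ with imaginary part vanishing on the bottom; the defining property of $\Ckh$ then yields $\log r - [\log r] = \Ckh(\theta)$, and differentiating gives $r'/r = \Ckh(\theta')$. Differentiating $z=re^{i\theta}$ in $t$ and separating real and imaginary parts provides the universal identity $w'' = (r'/r)w' + \theta'(1+\Ckh w')$; substituting $\theta'=F/(2\sigma k)$ and $r'/r=\Ckh(F)/(2\sigma k)$ recovers \eqref{2ndeqn}.

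For the converse, assume \eqref{mudef}, \eqref{2ndeqn} and \eqref{2ndWkh0}. Set $b := \theta' - F/(2\sigma k)$, which has zero mean by \eqref{mudef}. Subtracting \eqref{2ndeqn} from the universal identity above and using $r'/r = \Ckh(\theta')$ gives $(\Ckh b)w' + b(1+\Ckh w') = 0$, which is equivalent to $\imag(z\beta) = 0$ for $\beta := \Ckh b + ib$. Extending $\beta$ holomorphically to $\Rkh$ so its imaginary part vanishes on the bottom, the product $\hat z\hat\beta$ is holomorphic on $\Rkh$, and its imaginary part is harmonic, periodic, and vanishes on both boundaries; hence it vanishes identically, so $\hat z\hat\beta$ equals a real constant $c$. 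The zero-mean constraint on $\Ckh b$, combined with the geometric nondegeneracy $[(1+\Ckh w')/\Wkh(w)] \ne 0$ (automatic for the waves considered here, as it is satisfied at $w=0$ and at Crapper's profile), forces $c=0$, whence $b\equiv 0$ and $\theta' = F/(2\sigma k)$. Rearranging returns \eqref{2eqn}.

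The main obstacle is the converse: a purely algebraic derivation would require expanding $\Ckh(w'')$ and commuting $\Ckh$ past products like $w'\Ckh F$ and $(1+\Ckh w')F$ via a Tricomi-type product identity, and such an identity is considerably more awkward for the strip transform than for its infinite-depth limit $\mathcal{C}$ because $\Ckh^2 \ne -I$. The complex-analytic argument via holomorphic extensions on $\Rkh$ circumvents this difficulty cleanly and is the technical heart of the equivalence.
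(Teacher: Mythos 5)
A preliminary caveat: the paper does not actually prove this theorem---it states only that the proof ``can be found in \cite{deB}''---so there is no in-paper argument to compare yours against line by line. Your overall strategy (extract \eqref{mudef} as the mean of \eqref{2eqn}, then pass between \eqref{2eqn} and \eqref{2ndeqn} via the factorisation $1+\Ckh w'+iw'=re^{i\theta}$, the conjugacy relation $r'/r=\Ckh(\theta')$ and the pointwise identity $w''=(r'/r)w'+\theta'(1+\Ckh w')$) is the natural one and is consistent with the complex-analytic machinery the paper itself deploys in \S\ref{crapbif}, where it introduces $\Theta$ and the relation $\cal{W}(w)^{1/2}=\exp(\cal{C}\Theta(w))$.

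There are, however, two genuine gaps. The central one is your claim that the holomorphic extension $\hat z=1+Z_x+iW_x$ is ``non-vanishing on $\Rkh$ (by \eqref{Wkh0})''. Condition \eqref{Wkh0} controls only the trace of $\hat z$ on the top boundary $y=0$; it says nothing about the interior of the strip or about the bottom, and every key step of your argument needs the stronger statement. If $\hat z$ had an interior zero, then $\hat z'/\hat z$ would be meromorphic rather than holomorphic and the identities $\log r-[\log r]=\Ckh\theta$ and $r'/r=\Ckh(\theta')$ would fail; if the top trace had non-zero winding number, then $[\theta']$ would equal that integer rather than $0$, and taking means of \eqref{2eqn} would not produce \eqref{mudef}. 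By the argument principle the number of interior zeros is the winding number of the top trace minus that of the (real-valued) bottom trace, and neither quantity is controlled by \eqref{Wkh0} alone; likewise positivity of $1+Z_x$ on the bottom only follows from its having mean $1$ \emph{after} one knows it does not vanish there. You must either derive this zero-free property from the stated hypotheses or flag it as an additional standing assumption (which is effectively what the paper does in \S\ref{crapbif} when it writes ``which we require to be away from zero''). The second gap is in the converse: to conclude $c=0$ from $\hat z\hat\beta\equiv c$ you invoke $[(1+\Ckh w')/\Wkh(w)]\ne0$ and justify it only as ``automatic for the waves considered here'', whereas the theorem is asserted for arbitrary $w\in C^{2,\al}_{2\pi,0}$ satisfying the equations. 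In fact this mean equals $[\re(1/\hat z)]$ on $y=0$, which for a periodic holomorphic function is independent of height and equals $[1/(1+Z_x)]>0$ on the bottom---so the fact is provable, but only under the very same unestablished zero-free hypothesis; as written it is an assumption, not a proof.
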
In the case of infinite depth, the formulation \eqref{2nd} becomes, in the natural way, the problem of finding $w\in C^{2,\al}_{2\pi,0}$ such that \begin{subequations}\label{inf2nd}\begin{align}\nonumber&w''=\frac{w'}{2\sigma k}\cal{C}\!\left(c^2\cal{W}(w)^{-1/2}-\left(Q-\frac{2gw}k\right)\cal{W}(w)^{1/2}\right)+\\&\qquad\qquad+\frac{1}{2\sigma k}(1+\cal{C}w')\left(c^2\cal{W}(w)^{-1/2}-\left(Q-\frac{2gw}k\right)\cal{W}(w)^{1/2}\right)\label{inf2ndeqn}\\&\cal{W}(w)(t)\ne0\quad\text{for all }t\in\mathbb{R}.\label{2ndW0}\end{align}\end{subequations}That is, \eqref{infnext} is equivalent to \eqref{inf2nd}.

The advantage of appealing to these reformulations is that, as was shown in \cite{deB}, the right-hand side of \eqref{2ndeqn} (and also \eqref{inf2ndeqn}) can be viewed as a mapping from $C^{2,\al}$ to $C^{1,\al}$, since no more than one derivative is taken. However, $w''\in C^{0,\al}$ and so we must formally require this mapping to have $C^{0,\al}$ for its codomain, into which space $C^{1,\al}$ embeds compactly, thus demonstrating it to be a compact mapping. Later, this will allow us to establish that certain related linear operators are Fredholm operators of index $0$, allowing a certain simplification of the calculations undertaken to show bijectivity of said operators.
\section{Crapper's Waves}\label{crapsol}In this section, we shall show that $(\be_A,w_A)\in\R_+\times C^{2,\delta}_{2\pi,0,e}$, where $$\be_A:=\frac{1+A^2}{1-A^2},$$ is a solution to \eqref{deepc} -- or, indeed \eqref{infeqn} -- for any $A\in(-1,1)$, where we have made the change to $(\al,\be)$-variables and we take $Q/c^2=1$.

However, before we do so, we must show that $w_A\in C^{2,\delta}_{2\pi,0,e}$ indeed. Clearly, the function is $2\pi$-periodic and even; it is also trivial to note that since $|A|<1$ the function is infinitely differentiable.  It thus remains to show that $[w_A]=0$. For any $A\in(-1,1)$, consider the function given by $$F_A(z)=\frac{2(1-Az)}{1+Az}-2.$$Since it can be observed that this function is complex-differentiable away from its poles, of which it has one at $z=-1/A$ (which is outside the unit disc), it follows that this function is analytic on the unit disc. Then, since $F_A(0)\in\R$, it can be seen that \begin{equation}F_A(e^{it})=\frac{2(1-A^2)}{1+A^2+2A\cos t}-2-i\frac{4A\sin t}{1+A^2+2A\cos t}=w_A(t)+i\cal{C}w_A(t).\label{FAdef}\end{equation}It therefore follows that $$[w_A]:=\frac1{2\pi}\int_{-\pi}^\pi w_A(t)\,\mathrm{d}t=\re\left(\frac1{2\pi}\int_{-\pi}^\pi F_A(e^{it})\,\mathrm{d}t\right)=\re F_A(0)=0,$$where we have used the mean-value property for harmonic functions. Hence $w_A\in C^{2,\delta}_{2\pi,0,e}$ indeed.

It thus remains to show that $w_A$ satisfies $$\left((1+\cal{C}w_A')^2+w_A'^2\right)^{1/2}+2\be_A\frac{(1+\cal{C}w_A')w_A''-w_A'\cal{C}w_A''}{(1+\cal{C}w_A')^2+w_A'^2}=\left((1+\cal{C}w_A')^2+w_A'^2\right)^{-1/2}$$for $A\in(-1,1)$ and $\be_A=(1+A^2)/(1-A^2)$.

Recalling \eqref{FAdef}, we have that $$w_A'(t)+i(1+\cal{C}w_A')(t)=i+\frac{\mathrm{d}}{\mathrm{d}t}F_A(e^{it})=i+izF_A'(z)\big|_{z=e^{it}}$$and$$w_A''(t)+i\cal{C}w_A''(t)=\frac{\mathrm{d}^2}{\mathrm{d}t^2}F_A(e^{it})=\big(-z^2F_A''(z)-zF_A'(z)\big)\big|_{z=e^{it}}.$$For $w\in C^{2,\delta}_{2\pi}$, it is easy to verify the functional identity $$\frac{(1+\cal{C}w')w''-w'\cal{C}w''}{(1+\cal{C}w')^2+w'^2}=-\imag\left(\frac{w''+i\cal{C}w''}{w'+i(1+\cal{C}w')}\right).$$This implies that $$\frac{(1+\cal{C}w_A')w_A''-w_A'\cal{C}w_A''}{(1+\cal{C}w_A')^2+w_A'^2}=\left.\imag\left(\frac{-z^2F_A''(z)-zF_A'(z)}{i+izF_A'(z)}\right)\right|_{z=e^{it}}.$$Note that $$i+izF_A'(z)=\frac{i(1-Az)^2}{(1+Az)^2};\qquad z^2F_A''(z)+zF_A'(z)=-\frac{4Az(1-Az)}{(1+Az)^3}.$$Now, from the definition of $F_A$ and previous computations we have that $$\frac{z^2F_A''(z)+zF_A'(z)}{i+izF_A'(z)}=\frac{4Aiz}{(1+Az)(1-Az)}.$$We consider $$\frac{1}{2i}\left(\frac{4Aiz}{(1+Az)(1-Az)}+\frac{4Ai\bar z}{(1+A\bar z)(1-A\bar z)}\right)=\frac{2A(z+\bar z)(1-A^2|z|^2)}{|1+Az|^2|1-Az|^2}.$$Combining this with calculations from above, we obtain that \begin{eqnarray*}\imag\left(\frac{z^2F_A''(z)+zF_A'(z)}{i+izF_A'(z)}\right)&=&\frac{4A(1-A^2)\cos t}{|1+Ae^{it}|^2|1-Ae^{it}|^2}\qquad\text{at }z=e^{it}.\end{eqnarray*}In view of the calculations above, it can be seen that we aim to show that$$\frac{|1-Ae^{it}|^2}{|1+Ae^{it}|^2}+\frac{8A(1+A^2)\cos{t}}{|1+Ae^{it}|^2|1-Ae^{it}|^2}=\frac{|1+Ae^{it}|^2}{|1-Ae^{it}|^2},$$which is a simple calculation. 
\section{A sheet of solutions into which Crapper's waves are embedded.}\label{crapbif} In this section, we shall see that there exist physically-relevant solutions to the full infinite-depth gravity-capillary problem that are `close' in some sense to Crapper's waves.  In particular, it will be shown that, under a given scaling regime, the limiting form of the governing equation is exactly the equation satisfied by Crapper's waves, and this will be used to find solutions with limiting profiles under the aforementioned scaling given exactly by a Crapper's wave.

Then, our approach will be essentially the same, except that we shall be considering finite-depth flows that are both irrotational and possessing a non-zero constant vorticity. Specifically, we shall demonstrate that under the same scaling regime we shall obtain that the governing equation -- even in this more general scenario -- has the same limiting form; that is, the infinite-depth pure-capillary equations. An application of the Implicit Function Theorem then yields solutions, exactly as before, to the full problem with Crapper's waves as their limiting profile.

As might have been expected, the calculations are slightly more delicate and so more care must be taken. 

Both of these results will follow (after some preliminary calculations) directly from the Implicit Function Theorem, which we state here, for completeness. We use the form of this Theorem as given by Kielh\"ofer in \cite{K}, but this is a completely standard Theorem that can be found in many works, e.g. \cite[Theorem 4.B]{Z}.

In particular, note that this Theorem requires no greater regularity of the whole mapping than continuity. That is to say, whilst the mapping must clearly be continuously differentiable with respect to the implicit variable, it is \emph{not} required with respect to (any) other variables, with respect to which it is simply required to be continuous.
\begin{thm}Let $X,Y,Z$ be (real) Banach spaces, and let $F:U\times V\to Z$ be a continuous mapping, with $U\subset X,V\subset Y$ open sets. Suppose that $\partial_yF$ is continuous on $U\times V$ and let $(x_0,y_0)\in U\times V$ such that $F(x_0,y_0)=0$. Suppose that $\p_yF[x_0,y_0]$ is an isomorphism; that is, $\p_yF[x_0,y_0]:Y\to Z$ is bounded with a bounded inverse. Then there exists a neighbourhood $U_1\times V_1\subset U\times V$ of $(x_0,y_0)$ and a continuous mapping $f:U_1\to V_1$ with $f(x_0)=y_0$ such that $$F(x,f(x))=0\quad\text{for all }x\in U_1$$ and, further, that every solution of $F(x,y)=0$ for $(x,y)\in U_1\times V_1$ is of the form $(x,f(x))$.\end{thm}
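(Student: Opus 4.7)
The plan is to reduce the solvability of $F(x,y)=0$ for $y$ near $y_0$ to a fixed point problem and apply the Banach contraction mapping principle with parameters. Writing $A:=\p_yF[x_0,y_0]\in\mathcal{L}(Y,Z)$, which by assumption is a bounded linear isomorphism with bounded inverse $A^{-1}\in\mathcal{L}(Z,Y)$, I would define the auxiliary mapping
\[
T(x,y):=y-A^{-1}F(x,y),
\]
and observe that $F(x,y)=0$ if and only if $y=T(x,y)$. Thus finding the implicit function $f$ amounts to showing that, for each $x$ in a sufficiently small neighbourhood of $x_0$, the map $y\mapsto T(x,y)$ has a unique fixed point in some closed ball around $y_0$.

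First I would establish the contraction property in $y$. Since $\p_yT(x,y)=I-A^{-1}\p_yF(x,y)$ vanishes at $(x_0,y_0)$ and $\p_yF$ is continuous on $U\times V$, for any $\kappa\in(0,1)$ (say $\kappa=\tfrac12$) I can choose open neighbourhoods $U_0\subset U$ of $x_0$ and a closed ball $\overline{B_r(y_0)}\subset V$ of radius $r>0$ such that
\[
\|\p_yT(x,y)\|_{\mathcal{L}(Y)}\le\kappa\qquad\text{for all }(x,y)\in U_0\times\overline{B_r(y_0)}.
\]
By the mean value inequality in Banach spaces (applied on the convex set $\overline{B_r(y_0)}$), this yields $\|T(x,y_1)-T(x,y_2)\|\le\kappa\|y_1-y_2\|$ uniformly in $x\in U_0$. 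Next I would shrink $U_0$ to a smaller neighbourhood $U_1$ of $x_0$ on which $\|T(x,y_0)-y_0\|=\|A^{-1}F(x,y_0)\|\le(1-\kappa)r$; this is possible because $x\mapsto F(x,y_0)$ is continuous with $F(x_0,y_0)=0$. A standard computation then shows that $T(x,\cdot)$ maps $\overline{B_r(y_0)}$ into itself for every $x\in U_1$, since for $y\in\overline{B_r(y_0)}$,
\[
\|T(x,y)-y_0\|\le\|T(x,y)-T(x,y_0)\|+\|T(x,y_0)-y_0\|\le\kappa r+(1-\kappa)r=r.
\]

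With these ingredients in place, the Banach fixed point theorem, applied for each fixed $x\in U_1$, produces a unique fixed point $f(x)\in\overline{B_r(y_0)}$ of $T(x,\cdot)$. Setting $V_1:=B_r(y_0)$, uniqueness of the fixed point gives the ``every solution is of the form $(x,f(x))$'' clause, provided one is slightly careful to shrink $r$ so that $f$ actually takes values in the open ball $V_1$, which follows by choosing the data so that $f(x)\in B_{r/2}(y_0)$, say. Finally, to prove continuity of $f$ at an arbitrary $x_1\in U_1$, I would use the standard parameter-dependent contraction estimate:
\[
\|f(x)-f(x_1)\|=\|T(x,f(x))-T(x_1,f(x_1))\|\le\kappa\|f(x)-f(x_1)\|+\|T(x,f(x_1))-T(x_1,f(x_1))\|,
\]
which rearranges to $\|f(x)-f(x_1)\|\le(1-\kappa)^{-1}\|A^{-1}(F(x_1,f(x_1))-F(x,f(x_1)))\|$ and tends to $0$ as $x\to x_1$ by continuity of $F$ in its first variable at the fixed point $f(x_1)$.

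The main technical point — and the only place any subtlety enters — is arranging the shrinking of the neighbourhoods in the correct order so that the contraction constant $\kappa$, the self-mapping property $T(x,\overline{B_r(y_0)})\subset\overline{B_r(y_0)}$, and the openness of the domain $U_1$ all hold simultaneously; this is ultimately a three-line bookkeeping exercise relying only on continuity of $F$ in $x$ and joint continuity of $\p_yF$. Notably, no differentiability of $F$ in $x$ is used anywhere, which is precisely the feature the author flagged before the statement and will later exploit.
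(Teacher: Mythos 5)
Your proof is correct. Note, however, that the paper does not prove this theorem at all: it is quoted verbatim from Kielh\"ofer \cite{K} (see also \cite[Theorem 4.B]{Z}) precisely as a standard black box, and the contraction-mapping argument you give --- reformulating $F(x,y)=0$ as the fixed-point equation $y=T(x,y)$ with $T(x,y)=y-A^{-1}F(x,y)$, using continuity of $\p_yF$ for the uniform contraction constant and mere continuity of $F$ in $x$ for the self-mapping property and the continuity of $f$ --- is essentially the standard proof found in those references, correctly isolating the feature the author emphasises, namely that no differentiability in $x$ is required.
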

Let \begin{multline}\F(\al,\be,w):=w''-\frac{w'}{2\be}\cal{C}\left(\cal{W}(w)^{-1/2}-(b(\al,w)-2\al w)\cal{W}(w)^{1/2}\right)\\-\frac{1}{2\be}(1+\cal{C}w')\left(\cal{W}(w)^{-1/2}-(b(\al,w)-2\al w)\cal{W}(w)^{1/2}\right),\label{crapFdef}\end{multline}where,\begin{equation}b(\al,w):=\left[\cal{W}(w)^{1/2}\right]^{-1}\left[\cal{W}(w)^{-1/2}+2\al w\cal{W}(w)^{1/2}\right]\label{fullbdef}\end{equation} is defined such that the right-hand side of \eqref{crapFdef} has mean 1, and, as before,  $$\cal{W}(w)=w'^2+(1+\cal{C}w')^2.$$In view of \S2, it follows that $\F(\al,\be,w)=0$ if and only if $(\al,\be,w)$ satisfy \eqref{infeqn} after the change to $(\al,\be)$-variables. It is then clear that, in order to find solutions to the gravity-capillary wave problem, we are searching for $(\al,\be,w)\in\R_+\times\R_+\times X$ such that $\F(\al,\be,w)=0$. However, there is no mathematical reason to restrict to $\al>0$. Indeed, in \S\ref{crapsol}, we saw that Crapper's waves, given by $(0,\be_A,w_A)$ for $A\in(-1,1)$, are solutions to \eqref{infeqn} and thus, via the equivalence above, to $\F=0$.

Observe now that, in view of the comments at the end of \S2, and the Fredholm alternative, it follows that $\p_w\F[\al,\be,w]$ is a Fredholm operator of index $0$ whenever $\F(\al,\be,w)=0$. In particular, note that this implies that $\p_w\F[0,\be_A,w_A]$ is bijective if and only if it is injective.

Now let us consider the case of finite-depth and constant vorticity. We begin with \eqref{2ndeqn}, where we have made the substitution $$\la=\frac{m}{h}-\frac{\gamma h}2.$$Thus, we have \begin{multline}w''=\frac{w'}{2\sigma k}\Ckh\left(\left\{\la+\frac{\gamma}k\left(\frac{[w^2]}{2kh}+\mathcal{C}_{kh}(ww')-w-w\mathcal{C}_{kh}w'\right)\right\}^2\Wkh(w)^{-1/2}-\right.\\\left.-\left(Q(\la,k,w)-\frac{2gw}k\right)\Wkh(w)^{1/2}\right)+\\+\frac{1}{2\sigma k}(1+\Ckh w')\left(\left\{\la+\frac{\gamma}{k}\left(\frac{[w^2]}{2kh}+\mathcal{C}_{kh}(ww')-w-w\mathcal{C}_{kh}w'\right)\right\}^2\Wkh(w)^{-1/2}\right.\\\left.-\left(Q(\la,k,w)-\frac{2gw}k\right)\Wkh(w)^{1/2}\right),\label{fdeqn}\end{multline}where $Q(\la,k,w)$ is defined so that the right-hand side of \eqref{fdeqn} has zero mean.  Noting that the case for $\la<0$ can be treated entirely similarly, we consider the following bijective change of variables, exactly as before, $$(\al,\be)=\left(\frac{g}{k\la^2},\frac{k\si}{\la^2}\right),$$where, once again, we consider $g,\si$ to be fixed positive constants. With these new variables, \eqref{fdeqn} now becomes \begin{equation}w''=\frac{w'}{2\be}\cal{C}_{hk(\al,\be)}\left(\cal{A}(\al,\be,w)\right)+\frac{1}{2\be}\left(1+\cal{C}_{hk(\al,\be)}w'\right)\cal{A}(\al,\be,w),\label{resceqn}\end{equation}where \begin{multline*}\cal{A}(\al,\be,w):=\\\left\{1+\gamma\sqrt[4]{\frac{\al^3\si}{g^3\be}}\left(\frac{[w^2]}{2h}\sqrt{\frac{\al\si}{g\be}}+\cal{C}_{hk(\al,\be)}(ww')-w-w\cal{C}_{hk(\al,\be)}w'\right)\right\}^2\cal{W}_{hk(\al,\be)}(w)^{-1/2}-\\-\left(\hat Q(\al,\be,w)-2\al w\right)\cal{W}_{hk(\al,\be)}\end{multline*}(with $\hat Q$ given such that $\hat Q(g/k\la^2,k\si/\la^2,w)=Q(\la,k,w)/\la^2$) and $$k(\al,\be)=\sqrt{\frac{g\be}{\al\si}}.$$It should be clear that, as above, we wish to consider the limiting form of this equation as $\al\to0$. However, it is obvious that, in contrast to the procedure employed previously, the equation \eqref{resceqn} is not mathematically well-defined for $\al=0$. In particular, we observe that $k(\al,\be)\to\infty$ as $\al\to0$, meaning that the operators $\cal{C}_{hk(\al,\be)}$ and $\cal{W}_{hk(\al,\be)}$ are undefined for $\al=0$.  Note also that neither $\cal{A}$ nor $k$ are well-defined for $\al<0$ too.

Nonetheless, it can be seen from \cite{CV}, that it should be expected that $\cal{C}_d\to\cal{C}$ as $d\to\infty$ in a rigorous and well-defined manner. Since we aim to apply the Implicit Function Theorem, which, as shown above demands only continuity of the mapping and not necessarily continuous differentiability, it will suffice to exploit this property of $\cal{C}_d$ to construct a continuous mapping as desired.

Indeed, we have the following Lemma.\begin{lem}Given\label{Cdwcont} any $w_0\in C^{p,\delta}_{2\pi,0}$ and a sequence $((d_n,w_n))\subset\R_+\times C^{p,\delta}_{2\pi,0}$ such that $(d_n,w_n)\to(\infty,w_0)$ as $n\to\infty$, it holds that $$\cal{C}_{d_n}w_n\to\cal{C}w_0$$in $C^{p,\delta}_{2\pi,0}$.\end{lem}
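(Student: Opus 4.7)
The plan is to reduce the lemma to two separate claims via the triangle inequality
\[
\mathcal{C}_{d_n}w_n - \mathcal{C}w_0 \;=\; \mathcal{C}_{d_n}(w_n - w_0) \;+\; (\mathcal{C}_{d_n} - \mathcal{C})w_0,
\]
and show that each summand converges to zero in $C^{p,\delta}_{2\pi,0}$. The first captures the continuity of $\mathcal{C}_d$ \emph{uniformly} in the depth parameter; the second captures the pointwise convergence $\mathcal{C}_d \to \mathcal{C}$ on a fixed argument.

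For the first term, I would establish a uniform operator-norm bound $\sup_{d \ge d_0}\|\mathcal{C}_d\|_{C^{p,\delta}_{2\pi,0}\to C^{p,\delta}_{2\pi,0}} \le M$ for some $d_0 > 0$ (say $d_0 = 1$, since $d_n\to\infty$). Because $\mathcal{C}_d$ is defined through the harmonic-extension procedure recalled in \S2, its action on the $n$-th Fourier mode of a zero-mean function is multiplication by a symbol of the form $-i\coth(nd)$, whose modulus is bounded by $\coth(d_0)$ for all $d\ge d_0$ and all $n\ne 0$. The Privalov-type result stated in \S2 (from \cite{CV}) then upgrades this to a Hölder-space estimate with constants depending only on $p$, $\delta$ and $d_0$. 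Combined with $\|w_n - w_0\|_{C^{p,\delta}} \to 0$, this gives $\|\mathcal{C}_{d_n}(w_n - w_0)\|_{C^{p,\delta}} \to 0$.

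For the second term, I would exploit the harmonic-function characterisation directly. Let $W$ be the bounded harmonic extension of $w_0$ into the lower half-plane and $W_d$ the harmonic extension into $\mathcal{R}_d$ with zero trace at $y=-d$, with respective conjugates $Z$, $Z_d$. Writing $W_d = W + R_d$, the correction $R_d$ is harmonic on $\mathcal{R}_d$ with $R_d|_{y=0}=0$ and $R_d|_{y=-d} = -W(\cdot,-d)$. A Poisson-kernel estimate for the half-plane shows that $\|W(\cdot,-d)\|_{C^{p,\delta}}$ decays (exponentially, once one decomposes $w_0$ into Fourier modes and uses the bound $|a_n|\lesssim |n|^{-(p+\delta)}$), and uniform Schauder estimates on $\mathcal{R}_d$ propagate this decay to the trace of the harmonic conjugate of $R_d$ at $y=0$, which is precisely $(\mathcal{C}_d - \mathcal{C})w_0$.

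The main technical obstacle will be obtaining convergence in the full Hölder norm rather than in some weaker norm such as $C^p$ or $L^\infty$, since the $C^{0,\delta}$ seminorm interacts badly with naive Fourier truncation. The cleanest way around this is the harmonic-extension route just sketched, with Schauder constants that are uniform as $d\to\infty$; alternatively, one may represent $\mathcal{C}_d - \mathcal{C}$ as an integral operator whose kernel differs from the usual $\tfrac{1}{2}\cot((x-y)/2)$ by a smooth remainder with exponentially small amplitude in $d$, from which Hölder-norm convergence follows by standard kernel estimates.
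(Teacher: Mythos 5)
Your argument is essentially correct, but it is organised differently from the paper's, and the difference matters for where the technical work lands. You split $\mathcal{C}_{d_n}w_n-\mathcal{C}w_0=\mathcal{C}_{d_n}(w_n-w_0)+(\mathcal{C}_{d_n}-\mathcal{C})w_0$, whereas the paper splits it as $(\mathcal{C}_{d_n}-\mathcal{C})w_n+\mathcal{C}(w_n-w_0)$. The paper's splitting only ever needs the boundedness of the single fixed operator $\mathcal{C}$ (Privalov) applied to $w_n-w_0$, and then disposes of the operator difference applied to the merely bounded sequence $w_n$ with one estimate: writing $(\mathcal{C}_d-\mathcal{C})w=\kappa_d*w$ with $\widehat{\kappa_d}(m)=-i\sgn(m)\lambda_m$, $\lambda_m=2/(e^{2|m|d}-1)$, one gets $\|\kappa_d*w\|_{H^{p+1}}\le\bigl(\sup_{m\ne0}|m\lambda_m|\bigr)\|w\|_{H^p}$ and then uses the embeddings $H^{p+1}\subset C^{p,\delta}\subset H^p$ to convert this into a H\"older estimate vanishing as $d\to\infty$, uniformly over bounded sets of $w$. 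This is precisely the ``alternative'' you sketch in your final sentence (the kernel of $\mathcal{C}_d-\mathcal{C}$ is smooth with exponentially small amplitude), and it also resolves the worry you raise about the $C^{0,\delta}$ seminorm, since the Sobolev embedding absorbs it; your Poisson-kernel/Schauder route for $(\mathcal{C}_d-\mathcal{C})w_0$ would work too but is considerably heavier. The one place where your version needs more care is the uniform bound $\sup_{d\ge d_0}\|\mathcal{C}_d\|_{C^{p,\delta}\to C^{p,\delta}}<\infty$: boundedness of the symbol $\coth(|m|d)$ by $\coth(d_0)$ does \emph{not} by itself give a H\"older-space operator bound (bounded Fourier multipliers need not act boundedly on $C^{p,\delta}$), and Privalov's theorem as quoted gives boundedness of each $\mathcal{C}_d$ without obvious uniformity in $d$. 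The clean way to get the uniform bound is to write $\mathcal{C}_d=\mathcal{C}+\kappa_d*{}$ and use the kernel estimate above --- at which point you have reproduced the paper's computation and may as well adopt its splitting, which avoids needing the uniform operator bound altogether.
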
\begin{proof}Recall the natural embeddings of the H\"older spaces in the Sobolev spaces of weakly-differentiable square-integrable functions, that is $$H^{p+1}\subset C^{p,\delta}\subset H^p.$$ We shall employ this to appeal to the theory presented in \cite{CV}.

It is clear that we wish to show that $$\|\cal{C}_{d_n}w_n-\cal{C}w_0\|_{C^{p,\delta}}\to0$$as $n\to\infty$. Observing that $$\|\cal{C}_{d_n}w_n-\cal{C}w_0\|_{C^{p,\delta}}\le\|\cal{C}_{d_n}w_n-\cal{C}w_n\|_{C^{p,\delta}}+\|\cal{C}w_n-\cal{C}w_0\|_{C^{p,\delta}},$$and noting that the second term on the right-hand side vanishes as $n\to\infty$, by dint of the boundedness of the operator $\cal{C}$ (via Privalov's Theorem \cite{CV}) and the convergence of $w_n$ to $w_0$, it is clear that it suffices to show that $$\|\cal{C}_{d_n}w_n-\cal{C}w_n\|_{C^{p,\delta}}\to0$$as $n\to\infty$. Now, $$\|\cal{C}_{d_n}w_n-\cal{C}w_n\|_{C^{p,\delta}}\le C\|\cal{C}_{d_n}w_n-\cal{C}w_n\|_{H^{p+1}},$$for some constant $C$, by the above embeddings. Then, as described in \cite{CV}, we have that $$\|\cal{C}_{d_n}w_n-\cal{C}w_n\|_{H^{p+1}}=\|\kappa_{d_n}*w_n\|_{H^{p+1}},$$which is finite, due to the regularising properties of $\kappa_d$, where\begin{equation}\kappa_d(t):=\sum_{m\in\mathbb{Z}\setminus\{0\}}\frac{-2i\sgn(m)}{e^{2|m|d}-1}e^{imt}=\sum_{m\in\mathbb{Z}\setminus\{0\}}-i\sgn(m)\la_me^{imt}\label{kappadef}\end{equation}for $t\in\R$. But, for $$w(t)=\sum_{m\in\mathbb{Z}\setminus\{0\}}a_me^{imt}\in H^{p}$$(which is $2\pi$-periodic and has zero mean), it follows that $$\|\kappa_d*w\|_{H^{p+1}}^2=\sum_{m\in\mathbb{Z}\setminus\{0\}}m^{2(p+1)}\la_m^2a_m^2=\sum_{m\in\mathbb{Z}\setminus\{0\}}(m^2\la_m^2)(m^{2p}a_m^2)\le\left(\sup_{m\in\mathbb{Z}\setminus\{0\}}|m\la_m|\right)^2\|w\|_{H^p}^2.$$Thus, we obtain that $$\|\kappa_{d_n}*w_n\|_{H^{p+1}}\le\|\kappa_{d_n}\|_{W^{1,\infty}}\|w_n\|_{H^p}\le C\|\kappa_{d_n}\|_{W^{1,\infty}}\|w_n\|_{C^{p,\delta}},$$for some constant $C$. Since $(w_n)$ converges in $C^{p,\delta}$, we have that $\|w_n\|_{C^{p,\delta}}$ is bounded uniformly in $n$, and so it follows that $$\|\cal{C}_{d_n}w_n-\cal{C}w_n\|_{C^{p,\delta}}\le C\|\kappa_{d_n}\|_{W^{1,\infty}},$$where $C$ is some constant that does not depend on $n$ or $d_n$. But then we observe from \eqref{kappadef} that $\la_m\to0$ as $d\to\infty$ for all $m\in\mathbb{Z}\setminus\{0\}$ and so necessarily $\|\kappa_{d_n}\|_{W^{1,\infty}}\to0$ as $n\to\infty$, thus concluding the proof.\end{proof}

Therefore, we consider the following linear operator $\mathfrak{C}_{\al,\be}:C^{p,\delta}_{2\pi}\to C^{p,\delta}_{2\pi}$ given by $$\mathfrak{C}_{\al,\be}:=\begin{cases}\cal{C}_{hk(\al,\be)}&\al>0\\\cal{C}&\al\le0\end{cases},$$which, in view of Lemma \ref{Cdwcont}, is continuous for all $\al\in\R$ and $\be>0$. Defining $$\mathfrak{W}_{\al,\be}(w):=w'^2+(1+\mathfrak{C}_{\al,\be}w')^2,$$ we can thus construct the mapping $\mathfrak{A}(\al,\be,w)$ for $\al\ge0$, $\be>0$ and $w\in C^{2,\delta}_{2\pi,0,e}$ via \begin{multline*}\mathfrak{A}(\al,\be,w):=\\\left\{1+\gamma\sqrt[4]{\frac{\al^3\si}{g^3\be}}\left(\frac{[w^2]}{2h}\sqrt{\frac{\al\si}{g\be}}+\mathfrak{C}_{\al,\be}(ww')-w-w\mathfrak{C}_{\al,\be}w'\right)\right\}^2\mathfrak{W}_{\al,\be}(w)^{-1/2}-\\-\left(\hat Q(\al,\be,w)-2\al w\right)\mathfrak{W}_{\al,\be},\end{multline*}where $\hat Q(0,\be,w):=b(0,w)$, which, as seen above, is readily calculable to be its limit as $\al\to0$.

Continuously extending $\mathfrak{A}(\al,\be,w):=\mathfrak{A}(0,\be,w)$ for $\al<0$,\footnote{We must define all of our mappings for at least some small interval of negative values of $\al$. This is because the Implicit Function Theorem requires values of $\al$ in an open set, and we demand inclusion of $\al=0$.} we can thus define a mapping $\mathfrak{F}:\R\times\R_+\times C^{2,\delta}_{2\pi,0,e}\to C^{0,\delta}_{2\pi,0,e}$ by \begin{equation}\mathfrak{F}(\al,\be,w)=w''-\frac{w'}{2\be}\mathfrak{C}_{\al,\be}\left(\mathfrak{A}(\al,\be,w)\right)-\frac{1}{2\be}\left(1+\mathfrak{C}_{\al,\be}w'\right)\mathfrak{A}(\al,\be,w),\label{frakFdef}\end{equation}whence it is simple to observe that $\mathfrak{F}(\al,\be,w)=0$ for $\al,\be>0$ if and only if \eqref{resceqn}, and thus \eqref{fdeqn} (after a change of parameters), holds.

However, now note that $\mathfrak{F}(0,\be,w)=\F(0,\be,w)$, as above. In particular, we observe that $$\p_w\mathfrak{F}[0,\be_0,w_0]=\p_w\F[0,\be_0,w_0]$$for any $(\be_0,w_0)\in\R_+\times C^{2,\delta}_{2\pi,0,e}$ and that $\mathfrak{F}(0,\be_A,w_A)=0$ for every $A\in(-1,1)$.

The aim, then, in this section is to prove the following theorems, which show that Crapper's waves are indeed the limiting form of real gravity-capillary waves.\begin{thm}For every $0\ne A\in(-1,1)$ there exists a neighbourhood $U_A$ of $(0,\be_A)$ in $\R\times\R_+$, a neighbourhood $V_A$ of $(0,\be_A,w_A)$ in $\mathbb{R}\times\mathbb{R}_+\times X$ and a function $W_A:U_A\to X$ such that $$\{(\al,\be,w)\in V_A:\F(\al,\be,w)=0\}=\{(\al,\be,W_A(\al,\be)):(\al,\be)\in U_A\}.$$In particular, each solution with $\al=0$ is of the form $(0,\be_B,w_B)$ for some $B\in(-1,1)$.\end{thm}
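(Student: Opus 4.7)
The plan is to apply the Implicit Function Theorem (the one stated for completeness just before Lemma \ref{Cdwcont}) to the map $\F:\R\times\R_+\times X\to Z$ of \eqref{crapFdef}, with $X = C^{2,\delta}_{2\pi,0,e}$ and $Z = C^{0,\delta}_{2\pi,0,e}$, at the point $((\al,\be),w) = ((0,\be_A),w_A)$. That $\F(0,\be_A,w_A)=0$ is the content of \S\ref{crapsol}. It remains to verify: (a) $\F$ is continuous on a neighbourhood of this point and $\p_w\F$ is continuous there; (b) $\p_w\F[0,\be_A,w_A]:X\to Z$ is a bounded bijection.

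For (a), inspection of \eqref{crapFdef} and \eqref{fullbdef} shows that $\F$ is built from $\cal{C}$, differentiation, pointwise algebraic operations, the period-average $[\cdot]$, and polynomial functions of $(\al,\be)$. From \S\ref{crapsol} one reads
\[\cal{W}(w_A)^{1/2}=\frac{|1-Ae^{it}|^2}{|1+Ae^{it}|^2},\]
which is bounded away from zero uniformly in $t$ for each $A\in(-1,1)$. Hence $\cal{W}(w)$ stays bounded away from zero for $w$ in a $C^{2,\delta}$-neighbourhood of $w_A$, so the reciprocal powers appearing in $\F$ are smooth functions of $w$ there; continuity of $\F$ and of $\p_w\F$ follows.

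The main obstacle is (b). By the compactness remark at the end of \S2 together with the Fredholm alternative, $\p_w\F[0,\be_A,w_A]$ is Fredholm of index $0$, so it suffices to establish injectivity. Differentiating \eqref{crapFdef} at $(0,\be_A,w_A)$ produces a second-order linear equation $\p_w\F[0,\be_A,w_A]\varphi = 0$ whose coefficients are explicit rational expressions in $F_A(e^{it})$ and which couples $\varphi$ to $\cal{C}\varphi$, $\varphi'$ and $\cal{C}\varphi'$. The approach is to exploit the identity $w_A+i\cal{C}w_A = F_A(e^{it})$, which means that $\varphi+i\cal{C}\varphi$ is likewise the boundary value of a holomorphic function on the unit disc, and to rewrite the linearised equation as an equation for this holomorphic extension. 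Multiplying through by a suitable nowhere-vanishing holomorphic multiplier built from $F_A'$ (analogous to the computation used in \S\ref{crapsol} to verify the Crapper identity) should collapse the equation to a form whose kernel in $C^{2,\delta}_{2\pi,0,e}$ can be read off, with the restriction to even, zero-mean functions removing the translational and constant modes. For $A\ne 0$ the resulting spectral problem is non-degenerate; the value $A=0$ must be excluded since $w_0\equiv 0$ is the trivial flat wave, about which small-amplitude bifurcations produce a non-trivial kernel.

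With (a) and (b) in hand, the Implicit Function Theorem supplies open neighbourhoods $U_A\subset\R\times\R_+$ of $(0,\be_A)$ and $V_A$ of $(0,\be_A,w_A)$ together with a continuous map $W_A:U_A\to X$ with $W_A(0,\be_A)=w_A$ whose graph is the zero set of $\F$ in $V_A$. For the final assertion, the Crapper family $B\mapsto(0,\be_B,w_B)$ is a continuous curve of zeros of $\F$, and
\[\frac{\mathrm{d}\be_B}{\mathrm{d}B}=\frac{4B}{(1-B^2)^2}\]
does not vanish at $B=A$ when $A\ne 0$, so $B\mapsto\be_B$ is a local diffeomorphism near $A$ and sweeps out a neighbourhood of $\be_A$. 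The uniqueness part of the IFT then forces every zero of $\F$ in $V_A\cap\{\al=0\}$ to lie on this curve, i.e.\ to be of the form $(0,\be_B,w_B)$.
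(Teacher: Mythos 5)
Your overall architecture matches the paper's: apply the Implicit Function Theorem to $\F$ at $(0,\be_A,w_A)$, use the compactness remark from \S2 to see that $\p_w\F[0,\be_A,w_A]$ is Fredholm of index zero, reduce bijectivity to injectivity, and recover the final assertion about the $\al=0$ slice from local invertibility of $A\mapsto\be_A$ for $A\ne0$ together with the uniqueness clause of the IFT. Parts (a) and the endgame are fine and essentially identical to what the paper does.

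The genuine gap is in (b): injectivity of $\p_w\F[0,\be_A,w_A]$ is the entire substance of the theorem, and you do not prove it. Your paragraph says that rewriting the linearised equation via the holomorphic extension of $\varphi+i\mathcal{C}\varphi$ and multiplying by a ``suitable nowhere-vanishing holomorphic multiplier built from $F_A'$'' \emph{should} collapse it to a form whose kernel \emph{can be read off}, and that the resulting spectral problem \emph{is non-degenerate} for $A\ne0$. None of these assertions is verified, no candidate multiplier is exhibited, and it is not at all evident that the equation collapses: the linearisation couples $\varphi$, $\mathcal{C}\varphi$, $\varphi'$, $\mathcal{C}\varphi'$ and $\varphi''$, plus a nonlocal term coming from linearising the mean-value functional $b(\al,w)$ in \eqref{fullbdef}, and the triviality of its kernel is a quantitative fact about $\be_A$ that has to be computed, not read off. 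The paper handles this in three steps: it conjugates by the diffeomorphism $\Theta(w)=\arg(1+\mathcal{C}w'+iw')$ to pass to the Okamoto--Sh\=oji formulation (Lemmas \ref{FGequiv} and \ref{FinjGinj}), reducing the question to injectivity of $\mathrm{d}G[\theta_A]$; it then expands $\theta=\sum a_n\sin nt$ and derives a three-term recurrence which, using the special value $q_A=(1-A^2)/(1+A^2)$, factors as $A_k=n_kA_{k-2}$ with $A_k=a_{k+2}-A^2a_k$ and $n_k\to1/A^2>1$, forcing $A_k\equiv0$; finally the low modes are eliminated by hand, the hypothesis $A\ne0$ entering precisely at the step $-4A^2a_1=0$ (Lemma \ref{Ginj}). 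Your heuristic for excluding $A=0$ (a nontrivial kernel at the flat state) is consistent with this computation but is likewise asserted rather than shown. Until the kernel computation is actually carried out --- by your proposed complex-analytic route or otherwise --- the proof is incomplete at its central point.
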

\begin{thm}For every $0\ne A\in(-1,1)$ there exists a neighbourhood $U_A$ of $(0,\be_A)$ in $\R\times\R_+$, a neighbourhood $V_A$ of $(0,\be_A,w_A)$ in $\mathbb{R}\times\mathbb{R}_+\times X$ and a function $W_A:U_A\to X$ such $$\{(\al,\be,w)\in V_A:\mathfrak{F}(\al,\be,w)=0\}=\{(\al,\be,W_A(\al,\be)):(\al,\be)\in U_A\}.$$In particular, each solution with $\al=0$ is of the form $(0,\be_B,w_B)$ for some $B\in(-1,1)$.\end{thm}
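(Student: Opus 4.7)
The plan is to apply the Implicit Function Theorem (as stated in the excerpt) to $\mathfrak{F}$, taking $(\al,\be)\in\R\times\R_+$ as the parameter and $w\in X$ as the implicit variable, based at the point $(0,\be_A,w_A)$. By construction $\mathfrak{F}(0,\be,w)=\F(0,\be,w)$ for every $(\be,w)$, so the identity $\mathfrak{F}(0,\be_A,w_A)=0$ is immediate from \S\ref{crapsol}, and the linearisation $\p_w\mathfrak{F}[0,\be_A,w_A]$ agrees with $\p_w\F[0,\be_A,w_A]$. In particular, the isomorphism property of $\p_w\mathfrak{F}[0,\be_A,w_A]:X\to C^{0,\delta}_{2\pi,0,e}$ is inherited wholesale from the preceding theorem, and by the Fredholm-index-$0$ observation noted at the end of \S2 one only needs to quote the injectivity established there.

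The genuinely new task is thus to verify the two continuity hypotheses of the Implicit Function Theorem, namely continuity of $\mathfrak{F}$ and of $\p_w\mathfrak{F}$ on a neighbourhood of $(0,\be_A,w_A)$. For $\al>0$ the definition involves $\mathfrak{C}_{\al,\be}=\mathcal{C}_{hk(\al,\be)}$ with $k(\al,\be)=\sqrt{g\be/(\al\si)}\to\infty$ as $\al\to 0^+$, while for $\al\le 0$ it reduces to $\mathcal{C}$. Lemma~\ref{Cdwcont} supplies exactly the convergence $\mathcal{C}_{d_n}w_n\to\mathcal{C}w_0$ in $C^{p,\delta}$ needed to bridge these two regimes. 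Continuity of $\mathfrak{F}$ at $\al=0$ then follows by a cascade: continuity of $w\mapsto\mathfrak{C}_{\al,\be}w$ implies continuity of $\mathfrak{W}_{\al,\be}(w)$, which implies continuity of $\hat Q(\al,\be,w)$ (using the prescribed extension $\hat Q(0,\be,w):=b(0,w)$), then of $\mathfrak{A}(\al,\be,w)$, and finally of $\mathfrak{F}(\al,\be,w)$. A crucial observation in this cascade is that the prefactor $\gamma(\al^3\si/g^3\be)^{1/4}$ multiplying the vorticity correction in $\mathfrak{A}$ vanishes as $\al\to 0^+$, absorbing any contribution from the bracketed expression -- whose own first term $\frac{[w^2]}{2h}\sqrt{\al\si/g\be}$ also vanishes, while the remaining terms stay bounded by Lemma~\ref{Cdwcont}. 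The same cascade, carried out on the Fr\'echet derivative with respect to $w$, yields continuity of $\p_w\mathfrak{F}$.

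The main obstacle is this bookkeeping at $\al=0^+$: one must verify that the convergence supplied by Lemma~\ref{Cdwcont} can be threaded through every composition, product, quotient and square-root appearing in the definitions of $\mathfrak{A}$ and $\hat Q$, and in particular that the vorticity term contributes nothing in the limit. This replaces the direct verification of smoothness available in the infinite-depth case with a more delicate continuity argument, since $\mathfrak{F}$ is manifestly not differentiable (and, for $\gamma\ne 0$, not even defined) at $\al=0$ in the classical sense -- only continuous, which is precisely the weakest hypothesis permitted by the version of the Implicit Function Theorem quoted in the excerpt.

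Once all of this is in place, the Implicit Function Theorem yields the neighbourhoods $U_A,V_A$ and the continuous function $W_A$ together with the uniqueness clause. The final assertion about the $\al=0$ slice follows because for $A\ne 0$ the derivative $\frac{d\be_B}{dB}\big|_{B=A}=\frac{4A}{(1-A^2)^2}$ is nonzero, so $B\mapsto(\be_B,w_B)$ locally parameterises the Crapper branch through $(0,\be_A,w_A)$; hence $W_A(0,\be_B)=w_B$ for $B$ close to $A$, and every solution in $V_A$ with $\al=0$ is of the claimed form.
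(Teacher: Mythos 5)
Your proposal is correct and follows essentially the same route as the paper: the isomorphism property of $\p_w\mathfrak{F}[0,\be_A,w_A]=\p_w\F[0,\be_A,w_A]$ is obtained from the injectivity of Lemmas \ref{FinjGinj} and \ref{Ginj} combined with the Fredholm-index-zero observation, the Implicit Function Theorem is then applied, and the $\al=0$ slice is identified with the Crapper branch via the local invertibility of $A\mapsto\be_A$ for $A\ne0$. The continuity verification at $\al=0$ that you single out as the ``genuinely new task'' is handled in the paper not inside the proof but in the construction of $\mathfrak{C}_{\al,\be}$, $\mathfrak{A}$ and $\mathfrak{F}$ via Lemma \ref{Cdwcont}, so your treatment simply makes explicit a step the paper disposes of in the setup.
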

\begin{rems}It should be clear that, as a result of this theorem, we have found solutions to $\mathfrak{F}(\al,\be,w)$ with $\al>0$, which can be seen to approach Crapper's waves as $\al\to0$. That is to say, given the physical constants $g,\si>0$, for any $h,k,\la>0$ and $\gamma\in\R$ there exist gravity-capillary flows of conformal mean depth $h$, period $2\pi/k$, constant vorticity $\gamma$ and mass flux $$m=h\la+\frac{h^2\gamma}{2}.$$\item The second part of the first theorem proves, for our formulation, the result from \cite{OS}, which states that there is no secondary bifurcation from Crapper's waves in the pure capillary regime. In fact, what we shall prove is that, for $A\ne0$, in a neighbourhood of $(0,\be_A,w_A)$ all solutions of the form $(0,\be,w)$ to $\F=0$ or $\mathfrak{F}=0$ lie on a curve parameterised by $\be$.   It is easy to see that $A\mapsto\be_A$ is invertible for $A\ne0$ and so the result of no bifurcation from Crapper's waves is then immediate, since the curve of explicit solutions can be shown to be parameterised by $\be$ and thus is the curve found analytically.\end{rems}

We now make the following observation. Define a mapping $\Theta:X\to C^{1,\alpha}_{2\pi,0,o}$ by $\Theta(w)=\arg(1+\mathcal{C}w'+iw')$. Then it can be seen that $1+\mathcal{C}w'+iw'=\mathcal{W}(w)^{1/2}\exp(i\Theta(w))$. Notice now that $\mathcal{W}(w)^{1/2}\exp(i\Theta(w))=(1+\mathcal{C}w')+iw'$ is the boundary value of some function which is analytic on the lower half plane, which we require to be away from zero. It can also be clearly seen that it has values which tend to 1 as $\im z\to-\infty$. Then, by taking logarithms we see that $\log\mathcal{W}(w)^{1/2}+i\Theta(w)$ is the boundary value of some function which is analytic on the lower half plane, with its values tending to 0 as $\im z\to-\infty$; it then follows that $\mathcal{W}(w)^{1/2}=\exp(\mathcal{C}\Theta(w))$.

The fact that $\Theta$ is invertible follows naturally from its definition, and it can also be seen that $\mathrm{d}\Theta[w_A]$ is a homeomorphism.

We can then observe an equivalence between the formulation in terms of $w$ and the formulation in \cite{OS}, given in terms of $\theta$, as follows. \begin{lem}Let \label{FGequiv}$\be\in\R$ and define $\tilde F(w):=\F(0,\be,w)$, that is, \begin{multline}\tilde F(w):=\F(0,\be,w)=w''-\frac{w'}{2\be}\mathcal{C}\left(\mathcal{W}(w)^{-1/2}-\frac{\left[\cal{W}(w)^{-1/2}\right]}{\left[\cal{W}(w)^{1/2}\right]}\cal{W}(w)^{1/2}\right)-\\-\frac{1}{2\be}(1+\mathcal{C}w')\left(\mathcal{W}(w)^{-1/2}-\frac{\left[\cal{W}(w)^{-1/2}\right]}{\left[\cal{W}(w)^{1/2}\right]}\cal{W}(w)^{1/2}\right);\label{tildeFdef}\end{multline}and define \begin{equation}G(\theta):=\theta'-\frac{1}{2\be}\exp(-\cal{C}\theta)+\frac{1}{2\be}\frac{\left[\exp(-\cal{C}\theta)\right]}{\left[\exp(\cal{C}\theta)\right]}\exp(\cal{C}\theta).\label{Gdef}\end{equation}Then $\tilde F(w)=0$ if and only if $G(\Theta(w))=0$.\end{lem}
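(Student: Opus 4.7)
The plan is to derive an explicit algebraic identity $\tilde F(w) = (1+\cal{C}w')\,G(\Theta(w)) + w'\,\cal{C}G(\Theta(w))$, from which the easy direction is immediate, and then to exploit the analyticity underlying $\cal{C}$ together with the hypothesis $\cal{W}(w)\ne 0$ to extract the converse.

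For the identity I would start from $1 + \cal{C}w' + iw' = \exp(\cal{C}\Theta(w) + i\Theta(w))$, the boundary value of a function analytic in the lower half plane as discussed just before the lemma. Differentiating in $t$ and clearing denominators yields
\[
\cal{C}w'' + iw'' = \bigl(1+\cal{C}w'+iw'\bigr)\bigl(\cal{C}\Theta(w)' + i\Theta(w)'\bigr),
\]
whose imaginary part is the scalar relation $w'' = (1+\cal{C}w')\Theta(w)' + w'\cal{C}\Theta(w)'$. Next, substituting $\cal{W}(w)^{\pm 1/2}=\exp(\pm\cal{C}\Theta(w))$ and comparing \eqref{tildeFdef} with \eqref{Gdef}, one sees that the bracketed expression appearing both inside the Hilbert transform and in the multiplicative factor of $\tilde F(w)$ equals exactly $2\be\bigl(\Theta(w)' - G(\Theta(w))\bigr)$. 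Inserting this and the formula for $w''$ into \eqref{tildeFdef}, the $\Theta(w)'$-contributions cancel and the stated identity drops out. In particular, $G(\Theta(w)) = 0 \Rightarrow \tilde F(w) = 0$.

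For the converse, observe that $\tilde F(w) = \re\bigl[\overline{\Phi_1}\,\Phi_2\bigr]$, where $\Phi_1:=1+\cal{C}w'+iw'$ is the boundary value of a function analytic in the lower half plane tending to $1$ at infinity, and $\Phi_2:=G(\Theta(w)) + i\cal{C}G(\Theta(w))$. A short calculation gives $[G(\theta)]=0$ for every admissible $\theta$ (the two mean-value terms cancel), so $G(\Theta(w))$ has zero mean, and hence $\Phi_2$ is the boundary value of an upper-half-plane analytic function decaying to $0$ at infinity; $\overline{\Phi_1}$ is likewise the boundary value of an upper-half-plane analytic function tending to $1$, so $\overline{\Phi_1}\,\Phi_2$ extends holomorphically to the upper half plane with vanishing real part on the boundary and decay at infinity. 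A Schwarz reflection argument then extends it to an entire function that vanishes at infinity, which must be identically zero; since $|\Phi_1|^2 = \cal{W}(w) \ne 0$ by \eqref{W0}, we conclude $\Phi_2 \equiv 0$ and, in particular, $G(\Theta(w)) = 0$. The main obstacle is precisely this last step: the algebraic identity alone cannot be inverted pointwise to solve for $G(\Theta(w))$, so one genuinely needs the global complex-analytic picture, together with the pointwise non-vanishing of $\cal{W}(w)$, to conclude.
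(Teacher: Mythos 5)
Your proposal is correct and takes essentially the same approach as the paper: your identity $\tilde F(w)=(1+\mathcal{C}w')\,G(\Theta(w))+w'\,\mathcal{C}G(\Theta(w))$ is exactly the paper's factorization $\tilde G(\theta)=\exp(\mathcal{C}\theta)\cos\theta\,G(\theta)+\exp(\mathcal{C}\theta)\sin\theta\,\mathcal{C}(G(\theta))$ after substituting $w'=\exp(\mathcal{C}\theta)\sin\theta$ and $1+\mathcal{C}w'=\exp(\mathcal{C}\theta)\cos\theta$. Your Schwarz-reflection/Liouville step is the same half-plane argument the paper uses (that a decaying analytic extension with one vanishing boundary component must vanish identically), after which the non-vanishing of $\Phi_1$, i.e. $\mathcal{W}(w)\ne0$, forces $G(\Theta(w))=0$.
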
\begin{rem}In \cite{OS}, it was shown that when considering only infinite-depth pure-capillary waves there is no loss of generality in assuming (in our notation) that $b\equiv1$. Thus, they consider the equation \begin{equation}\theta'+q\sinh{\cal{C}\theta}=0,\label{nob}\end{equation}for a parameter $q\in\R$ corresponding to $1/\be$. Since we consider this regime as a limiting one, we have made a choice for the form of $b$ that suits us better, but is not necessarily identically $1$ in the limit, and thus it cannot be removed from the definition. Whilst \eqref{Gdef} is inelegant, and does not illustrate its similarity and close connection with \eqref{nob}, it allows the casual reader to understand the inherent structure more clearly.\end{rem}\begin{proof}Noting that $$w'=\mathcal{W}(w)^{1/2}\sin\Theta(w)=\exp(\mathcal{C}\Theta(w))\sin\Theta(w)\quad\text{ and }\quad1+\mathcal{C}w'=\exp(\mathcal{C}\Theta(w))\cos\Theta(w),$$ it is clear that $\tilde F(w)=\tilde G(\Theta(w))$, where \begin{multline}\tilde G(\theta)=(\exp(\mathcal{C}\theta)\sin\theta)'-\frac{1}{2\be}\exp(\mathcal{C}\theta)\sin\theta\,\mathcal{C}\!\left(\exp(-\mathcal{C}\theta)\right)+\frac{1}{2\be}\frac{\left[\exp(-\cal{C}\theta)\right]}{\left[\exp(\cal{C}\theta)\right]}\exp(\cal{C}\theta)\sin\theta\cal{C}\!\left(\exp(\cal{C}\theta)\right)\\-\frac{1}{2\be}\exp(\mathcal{C}\theta)\cos\theta\exp(-\mathcal{C}\theta)+\frac{1}{2\be}\frac{\left[\exp(-\cal{C}\theta)\right]}{\left[\exp(\cal{C}\theta)\right]}\exp(\cal{C}\theta)\cos\theta\exp(\cal{C}\theta).\label{tildeGdef}\end{multline}First observe that using the product rule and the linearity of $\mathcal{C}$ we can rewrite $\tilde G$ as \begin{eqnarray*}\tilde G(\theta)&=&\exp(\mathcal{C}\theta)\sin\theta\,\mathcal{C}\!\!\left(\theta'-\frac{1}{2\be}\exp(-\mathcal{C}\theta)+\frac{1}{2\be}\frac{\left[\exp(-\cal{C}\theta)\right]}{\left[\exp(\cal{C}\theta)\right]}\exp(\cal{C}\theta)\right)\\&&+\exp(\mathcal{C}\theta)\cos\theta\left(\theta'-\frac{1}{2\be}\exp(-\mathcal{C}\theta)+\frac{1}{2\be}\frac{\left[\exp(-\cal{C}\theta)\right]}{\left[\exp(\cal{C}\theta)\right]}\exp(\cal{C}\theta)\right)\\&=&\exp(\mathcal{C}\theta)\sin\theta\,\mathcal{C}(G(\theta))+\exp(\mathcal{C}\theta)\cos\theta\,G(\theta).\end{eqnarray*}We claim that this gives $G(\theta)=0$ if and only if $\tilde G(\theta)=0$; the proof then following immediately.

Indeed, let $A,B$ be functions analytic on the lower half-plane with boundary values $$\exp(\cal{C}\theta)\cos\theta+i\exp(\cal{C}\theta)\sin\theta,\qquad\cal{C}(G(\theta))+iG(\theta)$$respectively, both of which having values that tend to 0 as $\imag z\to-\infty$. It is then clear that the function $AB$ is analytic on the lower half-plane, also with values tending to 0 as $\imag z\to-\infty$, and such that the imaginary part of its boundary values is exactly $\tilde G(\theta)$. It then follows that the boundary values of $AB$ are given by $\cal{C}(\tilde G(\theta))+i\tilde G(\theta)$.  Since it is clear that the boundary values of $A$ are never zero, we have that the boundary values of $AB$ are zero if and only if the boundary values of $B$ are zero. This then naturally yields the claim.\end{proof}
Now, let us explain our motivation for introducing this formulation involving $\theta$.  Later, when proving the main theorems, we shall require the fact that $\p_w\F[0,\be_A,w_A]$ is an injective operator. However, this fact is not immediately obvious.  Nonetheless, we shall see in the following lemma that it will be equivalent to consider the linearisation of another mapping.
\begin{lem}For\label{FinjGinj} any $(\beta_0,w_0)\in\R_+\times C^{2,\delta}_{2\pi,0,e}$ such that $\F(0,\be_0,w_0)=0$ we have that $\p_w\F[0,\be_0,w_0]$is an injective linear operator if and only if $\mathrm{d}G[\Theta(w_0)]$ is an injective linear operator (where $\be=\be_0$ is taken in the definition of $G$ above).\end{lem}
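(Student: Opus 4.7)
The plan is to differentiate the factorisation from Lemma \ref{FGequiv} in two stages and thereby reduce injectivity of $\p_w\F[0,\be_0,w_0]$ to that of $\mathrm{d}G[\theta_0]$, where $\theta_0:=\Theta(w_0)$. Since $\tilde F=\F(0,\be_0,\cdot)$ and the proof of Lemma \ref{FGequiv} establishes the pointwise identity $\tilde F(w)=\tilde G(\Theta(w))$, the chain rule at $w_0$ gives
$$\p_w\F[0,\be_0,w_0] \;=\; \mathrm{d}\tilde F[w_0] \;=\; \mathrm{d}\tilde G[\theta_0]\circ\mathrm{d}\Theta[w_0].$$
Appealing to the fact (asserted before the lemma) that $\mathrm{d}\Theta[w_0]$ is a homeomorphism, the injectivity of $\mathrm{d}\tilde F[w_0]$ is equivalent to the injectivity of $\mathrm{d}\tilde G[\theta_0]$.

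Next, I would linearise the further factorisation
$$\tilde G(\theta) \;=\; \exp(\cal{C}\theta)\sin\theta\,\cal{C}(G(\theta)) \;+\; \exp(\cal{C}\theta)\cos\theta\,G(\theta)$$
derived in the proof of Lemma \ref{FGequiv}. The hypothesis $\F(0,\be_0,w_0)=0$ yields $G(\theta_0)=0$ (and therefore $\cal{C}(G(\theta_0))=0$), so in the product rule the two terms in which the derivative lands on the prefactors vanish at $\theta_0$. What remains is
$$\mathrm{d}\tilde G[\theta_0](\phi) \;=\; \exp(\cal{C}\theta_0)\sin\theta_0\,\cal{C}\bigl(\mathrm{d}G[\theta_0](\phi)\bigr) \;+\; \exp(\cal{C}\theta_0)\cos\theta_0\,\mathrm{d}G[\theta_0](\phi),$$
which has exactly the algebraic structure of the original factorisation, with $\mathrm{d}G[\theta_0](\phi)$ in the role of $G(\theta)$. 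One direction is then immediate: $\mathrm{d}G[\theta_0](\phi)=0$ trivially implies $\mathrm{d}\tilde G[\theta_0](\phi)=0$.

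For the reverse direction, I would re-run the analytic-extension argument from the end of the proof of Lemma \ref{FGequiv} on this linearised identity. Let $A$ be the analytic extension to the lower half-plane of $\exp(\cal{C}\theta_0+i\theta_0)$ and $B$ the analytic extension of $\cal{C}(\mathrm{d}G[\theta_0](\phi))+i\,\mathrm{d}G[\theta_0](\phi)$, both decaying to appropriate limits as $\imag z\to-\infty$; note that $A$ is nowhere zero since it is an exponential. Then $AB$ is analytic on the lower half-plane with the correct decay, and its imaginary boundary value equals $\mathrm{d}\tilde G[\theta_0](\phi)$. Hence $\mathrm{d}\tilde G[\theta_0](\phi)=0$ forces $AB\equiv 0$, so $B\equiv 0$, so $\mathrm{d}G[\theta_0](\phi)=0$. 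The main obstacle will be checking that the linearised factor $B$ really does admit the required analytic extension with the prescribed decay in the H\"older setting; however, this is morally the same verification already carried out for $G(\theta)$ itself in Lemma \ref{FGequiv}, and the remaining steps (the chain rule and the cancellation from $G(\theta_0)=0$) are routine.
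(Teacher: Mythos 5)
Your proposal is correct and follows essentially the same route as the paper: the chain rule through the homeomorphism $\mathrm{d}\Theta[w_0]$, the cancellation of the prefactor terms using $G(\theta_0)=0$, and the re-use of the analytic-extension argument from Lemma \ref{FGequiv} to pass between $\mathrm{d}\tilde G[\theta_0]$ and $\mathrm{d}G[\theta_0]$. If anything, you are slightly more explicit than the paper in tracking both directions of the equivalence.
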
\begin{proof}It is clear that $$\p_w\F[0,\be_0,w_0]=\mathrm{d}\tilde F[w_0],$$where $\be=\be_0$ is taken in the definition of $\tilde F$ above, and so we shall focus our attention upon $\mathrm{d}\tilde F$, since it will be enough to prove that $\mathrm{d}\tilde F[w_0]$ is injective.

But then we observe that $$\mathrm{d}\tilde F[w_0]w=\mathrm{d}\tilde G[\Theta(w_0)]\circ\mathrm{d}\Theta[w_0]w=\mathrm{d}\tilde G[\theta_0]\circ\mathrm{d}\Theta[w_0]w,$$where $\theta_0=\Theta(w_0)$. Since $\mathrm{d}\Theta[w_0]$ is a homeomorphism, the injectivity of $\mathrm{d}\tilde F[w_0]$ will follow if we show that $\mathrm{d}\tilde G[\theta_0]$ is injective. Therefore, we begin by looking at $\mathrm{d}\tilde G[\theta_0]$. We now linearise $\tilde G$ about $\theta_0$.  We get \begin{multline*}\mathrm{d}\tilde G[\theta_0]\theta=\mathcal{C}\theta\exp(\mathcal{C}\theta_0)\sin\theta_0\,\mathcal{C}(G(\theta_0))+\mathcal{C}\theta\exp(\mathcal{C}\theta_0)\cos\theta_0\,G(\theta_0)+\theta\exp(\mathcal{C}\theta_0)\cos\theta_0\,\mathcal{C}(G(\theta_0))\\-\theta\exp(\mathcal{C}\theta_0)\sin\theta_0\,G(\theta_0)+\exp(\mathcal{C}\theta_0)\sin\theta_0\,\mathcal{C}(\mathrm{d}G[\theta_0]\theta)+\exp(\mathcal{C}\theta_0)\cos\theta_0\,\mathrm{d}G[\theta_0]\theta.\end{multline*}Observe that $\theta_0$ is a solution of $\tilde G(\theta)=0$, since $\tilde G(\theta_0)=\tilde G(\Theta(w_0))=\tilde F(w_0)=0$, by assumption. Therefore, we have from Lemma \ref{FGequiv} that $G(\theta_0)=0$, and so $$\mathrm{d}\tilde G[\theta_0]\theta=\exp(\mathcal{C}\theta_0)\sin\theta_0\,\mathcal{C}(\mathrm{d}G[\theta_0]\theta)+\exp(\mathcal{C}\theta_0)\cos\theta_0\,\mathrm{d}G[\theta_0]\theta;$$by an entirely similar argument to that given in the Lemma above, it then follows that $\mathrm{d}\tilde G[\theta_0]\theta=0$ if and only if $\mathrm{d}G[\theta_0]\theta=0$. The lemma then trivially follows.\end{proof}

It is therefore clear that we should wish to prove the following result.
\begin{lem}For\label{Ginj} every $0\ne A\in(-1,1)$ the linear operator $\mathrm{d}G[\theta_A]$ is injective, where $\theta_A:=\Theta(w_A)$ and $\be=\be_A$ is taken in the definition of $G$.\end{lem}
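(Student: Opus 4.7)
The plan is to exploit the explicit rational formula for Crapper's waves to reduce injectivity of $\mathrm{d}G[\theta_A]$ to a rigidity statement about holomorphic boundary values on the unit disc. The first step is to compute $\mathrm{d}G[\theta_A]$ in closed form. A key preliminary simplification is that, at Crapper's wave, $[\exp(\mathcal{C}\theta_A)] = [\exp(-\mathcal{C}\theta_A)]$: from the computations in \S\ref{crapsol} one reads off that $\mathcal{W}(w_A)^{1/2} = (1+A^2-2A\cos t)/(1+A^2+2A\cos t)$, so $\mathcal{W}(w_A)^{-1/2}$ is obtained by $A\mapsto -A$, and the standard integral $\int_0^{2\pi}(1+A^2+2A\cos t)^{-1}\,\mathrm{d}t = 2\pi/(1-A^2)$ gives that both means equal $(1+3A^2)/(1-A^2)$. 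Consequently the ratio $[\exp(-\mathcal{C}\theta_A)]/[\exp(\mathcal{C}\theta_A)]$ equals $1$ at $\theta_A$, and a routine differentiation yields
\[
\mathrm{d}G[\theta_A]\theta = \theta' + \tfrac{1}{\beta_A}\cosh(\mathcal{C}\theta_A)\,\mathcal{C}\theta - \tfrac{[\cosh(\mathcal{C}\theta_A)\,\mathcal{C}\theta]}{\beta_A\,[\exp(\mathcal{C}\theta_A)]}\exp(\mathcal{C}\theta_A).
\]

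The second step is to pass to a complex-analytic reformulation. Writing $\chi := -\mathcal{C}\theta + i\theta$ and recalling that $\theta$ is odd and zero-mean, $\chi$ is the boundary value on $\partial\mathbb{D}$ of a function $X$ holomorphic on the open unit disc with $X(0)=0$. Analogously, the computations of \S\ref{crapsol} yield an explicit analytic parameterisation: $\exp(\mathcal{C}\theta_A)(\cos\theta_A + i\sin\theta_A)|_{z=e^{it}}$ agrees (up to a conjugation determined by the sign convention for $\mathcal{C}$) with the rational function $\Phi_A(z):=(1-Az)^2/(1+Az)^2$, whose only zero and pole in $\mathbb{C}$ are the double zero at $z=1/A$ and the double pole at $z=-1/A$, both lying strictly outside $\overline{\mathbb{D}}$ when $|A|<1$. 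Multiplying the kernel equation $\mathrm{d}G[\theta_A]\theta = 0$ by a suitable complex weight and adding its Hilbert-conjugate version, exactly in the spirit of the manipulation used in the proof of Lemma~\ref{FGequiv}, I would obtain a boundary identity between two functions each of which extends holomorphically to $\mathbb{D}$; the mean-correction term in $\mathrm{d}G[\theta_A]\theta$ translates into a single scalar multiple of $\Phi_A(z)$ in this identity.

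The final step is a rigidity argument. The explicit zero/pole structure of $\Phi_A$ on $\overline{\mathbb{D}}$ (none when $0\ne A\in(-1,1)$) allows the resulting holomorphic identity to be ``divided out'', and combined with $X(0)=0$ and a Liouville-type argument (or direct power-series matching at $z=0$) one first forces the scalar $c := [\cosh(\mathcal{C}\theta_A)\,\mathcal{C}\theta]/(\beta_A[\exp(\mathcal{C}\theta_A)])$ to vanish and then $X$ to vanish identically, hence $\theta\equiv 0$. The hypothesis $A\ne 0$ is genuinely needed here: at $A=0$ one has $\theta_A\equiv 0$ and the operator collapses to a constant-coefficient form with an obvious one-dimensional kernel, but for $A\ne 0$ the zero of $\Phi_A$ sits strictly off the circle and the division step is non-degenerate.

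The principal obstacle is precisely the mean-correction term $-c\exp(\mathcal{C}\theta_A)$. Because this correction was inserted so that $\mathrm{d}G[\theta_A]\theta$ always has zero mean, the mean of the kernel equation is tautological and cannot be used to conclude $c=0$ directly. Thus $\mathrm{d}G[\theta_A]$ is \emph{not} simply the linearisation of the ``pure'' Okamoto--Shoji operator $\theta' + (1/\beta_A)\sinh\mathcal{C}\theta$ considered in \cite{OS}, and one must carefully track the contribution of the $c\Phi_A$ term through the disc-analytic identity. The explicit rational form of $\Phi_A$ is what makes this tracking tractable and, ultimately, rigid enough to force $\theta\equiv 0$.
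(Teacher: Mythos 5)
Your set-up is right and matches the paper's: the equality $[\exp(\mathcal{C}\theta_A)]=[\exp(-\mathcal{C}\theta_A)]$ (both means indeed equal $(1+3A^2)/(1-A^2)$), the resulting form $\mathrm{d}G[\theta_A]\theta=\theta'+\be_A^{-1}\cosh(\mathcal{C}\theta_A)\,\mathcal{C}\theta+C_A(\theta)\exp(\mathcal{C}\theta_A)$ with $C_A(\theta)$ fixed by the zero-mean condition, and the identification of $\exp(\mathcal{C}\theta_A)e^{-i\theta_A}$ with the boundary values of $\Phi_A(z)=(1-Az)^2/(1+Az)^2$ are all correct. The gap is the entire second half. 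The step ``multiply the kernel equation by a suitable complex weight and add its Hilbert-conjugate version to obtain an identity between holomorphic boundary values'' cannot be carried out as described: the obstinate term is $\cosh(\mathcal{C}\theta_A)\,\mathcal{C}\theta$, which multiplies $\mathcal{C}\theta$ alone (not the holomorphic combination $\tau:=\theta+i\mathcal{C}\theta$) by a non-constant real function. For any $R$ holomorphic on the disc one has $\imag(R\tau)=\re R\cdot\mathcal{C}\theta+\imag R\cdot\theta$ on the circle, so producing $\bigl(\exp(\mathcal{C}\theta_A)+\exp(-\mathcal{C}\theta_A)\bigr)\mathcal{C}\theta$ with no accompanying $\theta$-term would force $R$ to be real-valued on the circle and hence constant. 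This is not a presentational issue but the heart of the matter: in Fourier variables this term couples the modes $n$ and $n\pm2$, so the kernel equation is a genuine second-order difference equation; that is exactly why the paper (following \cite{OS}) expands $\theta=\sum a_n\sin nt$, clears denominators, and analyses the resulting three-term recurrence rather than a first-order holomorphic identity of the kind used in Lemma \ref{FGequiv}.

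Even granting some generating-function repackaging, the final ``division plus Liouville / power-series matching at $z=0$'' cannot deliver the conclusion, because injectivity is a quantitatively special property of $q_A=1/\be_A=(1-A^2)/(1+A^2)$, which your argument never invokes. In the paper's proof the recurrence (valid only for $k\ge3$) yields $a_{k+2}=A^2a_k$ via the identity $1-A^4=(1+A^2)^2q_A$ together with the growth $n_k\to A^{-2}>1$, and the remaining free constants $a_1,a_2$ (and with them $C_A(\theta)$) are killed only by the explicit low-mode identities \eqref{0coeff}--\eqref{2coeff}, which collapse to $(1+A^2)^3a_2/(1+4A^2+A^4)=0$ and $-4A^2a_1=0$ precisely because of the value of $q_A$. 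For a generic $q$ the operator has nontrivial kernel (this is where bifurcation from the trivial solution happens), so no soft argument resting only on the zero/pole locations of $\Phi_A$ and on $X(0)=0$ can succeed. To repair the proposal you would either have to exhibit the claimed holomorphic identity explicitly --- and the computation above indicates none of the advertised form exists --- or fall back on the Fourier-coefficient recurrence.
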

\begin{rem}This lemma is, in essence, the condition proven in \cite{OS} in order to conclude that there is no secondary bifurcation from Crapper's waves.\end{rem}
\begin{proof}Following a similar argument to that given in \cite{OS} -- which we have adapted for our particular situation -- we can show that $\mathrm{d}G[\theta_A]$ is injective and hence conclude the injectivity of $\mathrm{d}\tilde G[\theta_A]$, $\mathrm{d}\tilde F[w_A]$ and, principally, $\p_w\F[0,\be_A,w_A]$. First, we calculate that $$\mathrm{d}G[\theta_A]\theta=\theta'+\frac{1}{2\be_A}\exp(-\mathcal{C}\theta_A)\mathcal{C}\theta+\frac{1}{2\be_A}\frac{\left[\exp(-\cal{C}\theta_A)\right]}{\left[\exp(\cal{C}\theta_A)\right]}\exp(\cal{C}\theta_A)\cal{C}\theta+C_A(\theta)\exp(\cal{C}\theta_A),$$for some appropriate constant $C_A(\theta)$.\footnote{An explicit calculation shows that $C_A(\theta)$ is exactly the value that would be found by solving $[\mathrm{d}G[\theta_A]\theta]=0$.} It can be calculated that $\left[\exp(-\cal{C}\theta_A)\right]=\left[\exp(\cal{C}\theta_A)\right]$, whence we obtain that $$\mathrm{d}G[\theta_A]\theta=\theta'+\frac{1}{2\be_A}\exp(-\cal{C}\theta_A)\cal{C}\theta+\frac{1}{2\be_A}\exp(\cal{C}\theta_A)\cal{C}\theta+C_A(\theta)\exp(\cal{C}\theta_A).$$ Then, by explicitly calculating $\exp(\pm\mathcal{C}\theta_A)$, setting $q_A=1/\be_A=(1-A^2)/(1+A^2)$ and writing $$\theta(t)=\sum_{n=1}^\infty a_n\sin{nt}\qquad\text{for }\theta\in C^{1,\delta}_{2\pi,0,o}$$ we have that $\mathrm{d}G[\theta_A]\theta=0$ is equivalent to \begin{multline*}\sum_{n=1}^\infty na_n\cos{nt}-\frac{q_A}{2}\frac{1+A^2+2A\cos{t}}{1+A^2-2A\cos{t}}\sum_{n=1}^\infty a_n\cos{nt}-\frac{q_A}{2}\frac{1+A^2-2A\cos{t}}{1+A^2+2A\cos{t}}\sum_{n=1}^\infty a_n\cos{nt}+\\+C_A(\theta)\frac{1+A^2-2A\cos{t}}{1+A^2+2A\cos{t}}=0.\end{multline*}Multiplying both sides by the common denominator of the left-hand side, we have that this is equivalent to \begin{multline*}(1+A^4)\sum_{n=1}^\infty(n-q_A)a_n\cos{nt}-A^2\sum_{n=1}^\infty(n+q_A)a_n\big(\cos{(n+2)t}+\cos{(n-2)t}\big)-4A^2q_A\sum_{n=1}^\infty a_n\cos{nt}\\+C_A(\theta)(1+4A^2+A^4-4A(1+A^2)\cos{t}+2A^2\cos{2t})=0.\end{multline*} Comparing coefficients, we have \begin{eqnarray}C_A(\theta)(1+4A^2+A^4)&=&A^2(2+q_A)a_2,\label{0coeff}\\((1+A^4)(1-q_A)-4A^2q_A)a_1&=&A^2(3+q_A)a_3+A^2(1+q_A)a_1+4A(1+A^2)C_A(\theta),\label{1coeff}\\((1+A^4)(2-q_A)-4A^2q_A)a_2&=&A^2(4+q_A)a_4-2A^2C_A(\theta),\label{2coeff}\\((1+A^4)(k-q_A)-4A^2q_A)a_k&=&A^2(k+2+q_A)a_{k+2}+A^2(k-2+q_A)a_{k-2},\qquad k\ge3.\nonumber\end{eqnarray}Exactly as described in \cite{OS}, the fourth relation can be written as $A_k=n_kA_{k-2}$ for $k\ge3$, where $A_k=a_{k+2}-A^2a_k$ and $$n_k=\frac{k-2+q_A}{A^2(k+2+q_A)}.$$This relies on the special, known form of $q_A$, which gives that $$1-A^4=(1+A^2)(1-A^2)=(1+A^2)^2q_A.$$Indeed, \begin{eqnarray*}A_k&=&n_kA_{k-2}\\\iff a_{k+2}-A^2a_k&=&n_k(a_k-A^2a_{k-2})\\\iff A^2(k+2+q_A)a_{k+2}-A^4(k+2+q_A)a_k&=&(k-2+q_A)a_k-A^2(k-2+q_A)a_{k-2}\\\iff A^2(k+2+q_A)a_{k+2}+A^2(k-2+q_A)a_{k-2}&=&\big((1+A^4)k+(1+A^4)q_A-2(1-A^4)\big)a_k\\&=&\big((1+A^4)k+(1+A^4)q_A-2(1+A^2)^2q_A\big)a_k\\&=&\big((1+A^4)(k+q_A)-2(1+A^4)q_A-4A^2q_A\big)a_k,\end{eqnarray*}which is exactly the fourth equation above. Since $n_k\to 1/A^2>1$ as $k\to\infty$, and we require boundedness on $a_k$ and thus $A_k$, it follows that necessarily $A_k=0$ for $k\ge1$. Hence $a_{k+2}=A^2a_k$ for $k\ge1$.

Now, using \eqref{0coeff} to write $C_A(\theta)$ in terms of $a_2$ and using $a_4=A^2a_2$, \eqref{2coeff} is equivalent to $$\left(2-2A^4-q_A(1+4A^2+2A^4)+\frac{2A^4(2+q_A)}{1+4A^2+A^4}\right)a_2=0.$$Using the given form for $q_A$, a tedious calculation shows that this is in turn equivalent to $$\frac{(1+A^2)^3}{1+4A^2+A^4}a_2=0,$$which itself is equivalent to $a_2=0$. It then follows that $C_A(\theta)=0$, and so \eqref{1coeff} is equivalent to $$\left((1+A^4)(1-q_A)-4A^2q_A\right)a_1-A^2\left((1+q_A)+A^2(3+q_A)\right)a_1=0.$$A slightly less lengthy, but similarly tedious, calculation shows that this is equivalent to $$-4A^2a_1=0,$$which, for $A\ne0$ is equivalent to $a_1=0$. Thus, we obtain that when $0\ne A\in(-1,1)$, the only solutions are given by $a_1=a_2=0$, whence it follows that $a_k=0$ for all $k$. Therefore, we conclude that $\mathrm{d}G[\theta_A]\theta=0$ if and only if $\theta=0$, which is to say that $\mathrm{d}G[\theta_A]$ is injective.\end{proof} We are now in a position to be able to present the proof of the theorems, which, with the use of this lemma will follow almost immediately. In essence, the proof of the theorems is an application of the Implicit Function Theorem, as stated above. Notice that it follows from Lemmas \ref{FinjGinj} and \ref{Ginj} that $\p_w\mathfrak{F}[0,\be_A,w_A]$ is an injective linear operator for each $0\ne A\in(-1,1)$. Therefore we shall prove the theorem only for $\F$, observing that the proof for $\mathfrak{F}$ is entirely similar.\begin{proof}[Proof of Theorems] As should be clear, the main result will follow from an application of the Implicit Function Theorem. It simply remains to verify the conditions of that theorem. First, we observe that $\p_w\F[\al,\be,w]$ can be written as a compact perturbation of an invertible linear operator, since the derivative of a compact nonlinear mapping is a compact linear operator. Thus, by \cite[Theorem 2.7.6]{BufTol}, it is a Fredholm operator of index zero. For any $0\ne A\in(-1,1)$ we have, from Lemmas \ref{FinjGinj} and \ref{Ginj}, that $\p_w\F[0,\be_A,w_A]$ is injective; by the Fredholm property it is also surjective and thus an isomorphism. Then, a simple application of the implicit function theorem yields, for each $0\ne A\in(-1,1)$, a neighbourhood $V_A\subset\R\times\R_+\times X$ of $(0,\be_A,w_A)$, and a (real-analytic) function $W_A:U_A\to X$ for some neighbourhood $U_A\subset\R\times\R_+$ of $(0,\be_A)$ such that, exactly as written in the statement of the theorem, $$\{(\al,\be,w)\in V_A:\F(\al,\be,w)=0\}=\{(\al,\be,W(\al,\be):(\al,\be)\in U\}$$and so that $W(0,\be_A)=w_A$. For the second part of the theorem, observe that those solutions in $V_A$ with $\al=0$ are given by $(0,\be,W(0,\be))$ for $(0,\be)\in U_A$. That is, they form a one-dimensional curve which, as explained above, must be exactly Crapper's waves and no others.\end{proof}The main result of this theorem is that in a neighbourhood of $(0,\be_A,w_A)$ in $\mathbb{R}\times\mathbb{R}_+\times X$ the solution set of $\F=0$ is parameterised by $(\al,\be)$ and is thus a two-dimensional manifold, or `sheet', of solutions, into which is embedded the curve of Crapper's waves; this curve corresponding exactly to those solutions with $\al=0$. We thus obtain a two-dimensional sheet of solutions with $\al>0$ and thus physically-relevant solutions to $\F=0$ which are close to Crapper's waves, but fit within the full gravity capillary regime.

\end{document}